\documentclass[11pt]{amsart}
\usepackage{latexsym}
\usepackage{amsmath,amsthm,amssymb, amscd,color}
\usepackage{mathrsfs}
\usepackage[all]{xy}
\usepackage[all]{xy}
\usepackage{tikz}
\usepackage{tikz-cd}
\usepackage{adjustbox}
\usetikzlibrary{snakes}
\usepackage[utf8]{inputenc}
\usepackage{enumerate}
\usepackage{breqn}
\usepackage{hyperref}
\usepackage{listings}
\usepackage{graphicx}
\usepackage{float}
\usepackage{tikz}
\usepackage{xcolor}
\usetikzlibrary{shapes}
\tikzset{
    partial ellipse/.style args={#1:#2:#3}{
        insert path={+ (#1:#3) arc (#1:#2:#3)}
    }
}

\lstdefinestyle{mystyle}{
    language=Python,
    basicstyle=\small\ttfamily,
    showspaces=false,
    showstringspaces=false,
    numbers=left,
    numberstyle=\tiny,
    numbersep=5pt
}

\usepackage{cleveref}

\usepackage{blkarray} 
\usepackage{framed}

\numberwithin{equation}{section}

\theoremstyle{plain}
\newtheorem{theorem}{Theorem}[section]

\numberwithin{equation}{section}
\newtheorem{proposition}[theorem]{Proposition}

\theoremstyle{definition}
\newtheorem{definition}[theorem]{Definition}

\theoremstyle{remark}

\newcommand{\RR}{\mathbb{R}}

\newcommand{\SN}{\mathbb{S}}

\begin{document}

\title[Some remarks on vector fields in $\RR^2$]{Some remarks on polynomial vector fields in $\RR^2$}

\author[S. Jana]{Supriyo Jana}
\address{Department of Mathematics, Indian Institute of Technology Bombay, India}
\email{supriyojanawb@gmail.com}

\author[S. Sarkar]{Soumen Sarkar}
\address{Department of Mathematics, Indian Institute of Technology Madras, India}
\email{soumen@iitm.ac.in}

\date{\today}
\subjclass[2020]{34C07, 34C05, 34A34, 34C40}
\keywords{Polynomial vector field, Kolmogorov system, first integral, invariant circle, limit cycle}
\thanks{}

\abstract 
In this paper, we study planar polynomial vector fields admitting invariant circles. We characterize polynomial vector fields having $m$ concentric invariant circles, and we describe those having three invariant circles whose centers are not collinear. We also give a necessary and sufficient condition for the unit circle to be the unique limit cycle of a quadratic vector field. We prove that a cubic or quartic Kolmogorov vector field having the unit circle invariant has no circular limit cycle, and that a quartic vector field with two concentric circular limit cycles has at most three circular limit cycles, this bound being sharp.
\endabstract

\maketitle 

\section{Introduction and statement of the main results}
The qualitative theory of differential equations studies the global behavior of solutions without solving the equations explicitly. Among the systems considered there, the polynomial ones have received particular attention. We consider the following system of differential equations
\begin{equation} \label{eq: I1}
 \frac{dx}{dt} = P(x, y), \frac{dy}{dt} = Q(x, y),  \end{equation}
called a polynomial differential system in $\mathbb{R}^2$ where $P, Q \in \mathbb{R}[x,y]$. The differential operator
\begin{equation}  \label{eq: I2}
 \mathcal{X} = P \frac{\partial}{\partial x} + Q   \frac{\partial}{\partial y}
\end{equation}
is called the vector field associated with the system \eqref{eq: I1}. The degree of the polynomial vector field in \eqref{eq: I2} is defined as the maximum of the degrees of  $P$ and $Q$. 

An isolated periodic orbit in the set of all periodic orbits of the vector field \eqref{eq: I2} is called a limit cycle. If a limit cycle is an oval of an algebraic curve, it is called an algebraic limit cycle. Hilbert’s 16th problem \cite{hilbert2000mathematical} asks for the maximum number of limit cycles of a planar polynomial vector field as a function of the degree of the vector field. For instance, a degree one vector field does not exhibit limit cycles. Dulac's theorem \cite{Per01} states that a planar polynomial vector field has only finitely many limit cycles. Quadratic vector fields having 4 limit cycles were constructed in \cite{chen1979relative, shi1980concrete}. Gaiko \cite{gaiko2008limit} proved that a quadratic vector field has at most 4 limit cycles. So, this is the sharp bound for quadratic vector fields. Although Hilbert's 16th problem is more than 100 years old, still it is unsolved even for degree three. However, the problem motivated significant advancements in the geometric theory of planar differential equations, bifurcation theory, foliations, and some topics in algebraic geometry. Notable contributions to the Hilbert's 16th problem can be found in  \cite{caubergh2008hilbert,han2012lower,llibre201016th,prohens2018new,zhang201116th}.

 The system \eqref{eq: I1} is called a polynomial Kolmogorov system if $P=x P'$, and $Q=yQ'$ for some $P', Q' \in \RR[x, y]$. The associated vector field is called a polynomial Kolmogorov vector field. Planar Kolmogorov systems were introduced by Kolmogorov \cite{kolmogorov1936sulla} to describe the interaction between two species occupying the same ecological habitat. These systems may exhibit quite complex dynamical behaviors, see \cite{hirsch1989systems} and the references therein (e.g., references 6, 10, 11, and 12).

 An invariant algebraic set for \eqref{eq: I2} is a subset $A \subset \mathbb{R}^2$ such that $A$ is the zero set of some polynomial $f \in \mathbb{R}[x,y]$ and $\mathcal{X}f = Kf$ for some $K \in \mathbb{R}[x,y]$. The polynomial $K$ is called the cofactor of $f=0$. Moreover, $\mathcal{X}$ is also called a polynomial vector field on $A$. 
 
\begin{definition}
Let $U$ be an open subset of $\RR^2$. A non-constant smooth map $H \colon U \to \mathbb{R}$ is called a first integral
of the vector field \eqref{eq: I2} on $U$ if $H$ is constant on all solution curves of the system \eqref{eq: I1} contained in $U$; i.e., $H(x(t),y(t)) =$ constant for all values of $t$ for which the solution $(x(t),y(t))$ is defined and contained in $U$.
\end{definition}
Note that $H$ is a first integral of the vector field \eqref{eq: I2} on $U$ if and only if $\mathcal{X} H=0$ on $U$. According to the Darboux theory of integrability \cite{darboux1878memoire, DuLlAr06}, if a system in $\RR^2$ has sufficiently many invariant algebraic sets, then it has a rational first integral.

This paper is organized in the following way. In Section \ref{sec:pol-vflds-with-inv-circ}, we recall the characterization of polynomial (Kolmogorov) vector fields on $\SN^1$. We explicitly describe degree $n$ vector fields with $m(\leq \lfloor \frac{n}{2} \rfloor)$ invariant concentric invariant circles, see \Cref{inv-concentric-form}.
We describe the polynomial planar vector fields having three invariant circles whose centers make a triangle; see Proposition \ref{vertices-triangle}. In the rest of this section, we exhibit a few more properties of planar vector fields having some invariant circles, see Propositions \ref{thm_not_equiv} and \ref{prop:disprove_15}.

In Section \ref{sec:cubic}, we prove that quartic vector fields with two concentric circular limit cycles can have at most 3 circular limit cycles \Cref{thm:two-con-quartic}. We also show that the bound is sharp. We discuss circular limit cycles of cubic and quartic Kolmogorov vector fields with $\SN^1$ as an invariant circle, see Theorems \ref{thm:kolm-3} and \ref{thm-qt-kolm}.


\section{Polynomial vector fields with invariant circles} \label{sec:pol-vflds-with-inv-circ}
In this section, we explore several interesting properties of planar vector fields having invariant circles. We begin with recalling two known results.  

\begin{proposition}[\cite{MR4883650}]\label{prop:s1-char}
Let $\SN^1$ be an invariant set of a planar polynomial differential system of degree $m$. Then the system satisfies the following.
\begin{equation*}
    \dot{x}= (x^2+y^2-1)f +A y,~\dot{y}= (x^2+y^2-1)g-A x
\end{equation*} for some $f,g,A\in \RR[x,y]$ with $\deg f, \deg g\leq (m-2)$ and $\deg A\leq (m-1)$.
\end{proposition}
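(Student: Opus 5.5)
We read ``$\SN^1$ is an invariant set'' in the sense fixed in the introduction: $C:=x^2+y^2-1$ satisfies $\mathcal{X}C=KC$ for some cofactor $K\in\RR[x,y]$. The plan is to turn this into a divisibility statement for the polynomial $xP+yQ$, and then to solve the resulting syzygy between $x$ and $y$ modulo $C$.

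\emph{Step 1 (the divisibility).} Since $\mathcal{X}C = 2xP+2yQ$, the hypothesis says $xP+yQ = \tfrac12 KC$. Thus $xP+yQ$ lies in the ideal $(C)$, and $\deg(xP+yQ)\le m+1$ forces $\deg K\le m-1$. (If invariance were meant only setwise, one would instead observe that $xP+yQ$ vanishes on $\SN^1$. Because $C$ is irreducible and its real zero set is Zariski dense in $\{C=0\}$, the real Nullstellensatz for this real curve again gives $C\mid xP+yQ$.)

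\emph{Step 2 (division by $C$).} Divide $P$ and $Q$ by $C$, treating $C$ as monic of degree $2$ in $y$. This gives $P=Cf_0+p_1(x)y+p_0(x)$ and $Q=Cg_0+q_1(x)y+q_0(x)$. Using $y^2\equiv 1-x^2$ modulo $C$, we get
\[
xP+yQ\equiv \bigl(xp_0+(1-x^2)q_1\bigr)+y\bigl(xp_1+q_0\bigr)\pmod{C}.
\]
Every class modulo $C$ has a unique representative $a(x)+yb(x)$, so Step 1 yields $q_0=-xp_1$ and $xp_0=(x^2-1)q_1$. Comparing these at $x=0$ forces $q_1(0)=0$, so $q_1=xr$ and $p_0=(x^2-1)r$ for some $r\in\RR[x]$. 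Now set $A:=p_1$. Then
\[
P=C(f_0+r)+Ay,\qquad Q=C(g_0+yr)-Ax.
\]
This is the required form with $f=f_0+r$ and $g=g_0+yr$. A more invariant way to say the same thing: the syzygy module of $(x,y)$ modulo $C$ is generated by $(y,-x)$, together with multiples of $C$.

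\emph{Step 3 (degrees).} The construction above does not control degrees directly, and this is the main obstacle; we fix it afterwards. Among all representations $P=Cf+Ay$, $Q=Cg-Ax$, choose one that minimizes $N=\max(\deg f+2,\ \deg g+2,\ \deg A+1)$, and suppose $N>m$. The top-degree parts must cancel in $P$ and in $Q$:
\[
(x^2+y^2)f_N+yA_N=0,\qquad (x^2+y^2)g_N-xA_N=0,
\]
where $f_N,g_N,A_N$ denote the homogeneous components of $f,g,A$ of degrees $N-2$, $N-2$, $N-1$ respectively. Since $x^2+y^2$ is irreducible over $\RR$ and coprime to $x$ and to $y$, it divides $A_N$; write $A_N=(x^2+y^2)h$. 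Then $f_N=-yh$ and $g_N=xh$. Now replace $A$ by $A-Ch$, $f$ by $f+yh$, and $g$ by $g-xh$. This leaves $P$ and $Q$ unchanged, because $C\,yh-Ch\,y=0$ and $-C\,xh+Ch\,x=0$. It also removes the top components, so $N$ strictly decreases, contradicting minimality. Hence $N\le m$, which gives $\deg f,\deg g\le m-2$ and $\deg A\le m-1$.
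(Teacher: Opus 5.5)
Since the paper quotes this proposition from the literature without proof, your argument has to stand on its own. Steps 1 and 3 are fine, but the final display of Step 2 is wrong. From $p_0=(x^2-1)r$ and $q_1=xr$ you actually have
\[
P=Cf_0+p_1y+(x^2-1)r,\qquad Q=Cg_0+xyr-xp_1 .
\]
Your expressions $C(f_0+r)+p_1y$ and $C(g_0+yr)-p_1x$ exceed these by $y^2r$ and $yr(C-x)$ respectively. In effect you replaced $(x^2-1)r$ by $Cr$ and $xyr$ by $Cyr$. For a concrete counterexample, take $P=x^2-1$, $Q=xy$. Then $xP+yQ=xC$, with $r=1$ and $f_0=g_0=p_1=0$, but your recipe returns $P=C$ and $Q=yC$. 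The repair is to use $x^2-1=C-y^2$, which gives
\[
P=C(f_0+r)+(p_1-yr)\,y,\qquad Q=Cg_0-(p_1-yr)\,x,
\]
so $f=f_0+r$, $g=g_0$, $A=p_1-yr$.

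Once corrected, Step 2 already gives the degree bounds. The ``main obstacle'' you identify is an artifact of the slip: it came from the spurious $g=g_0+yr$. Dividing by $C$ as a monic polynomial in $y$ is just repeated substitution $y^2\mapsto 1-x^2$, which never raises total degree. Hence the remainders have degree at most $m$, so $\deg f_0,\deg g_0\le m-2$, $\deg p_1\le m-1$, and $\deg r\le m-2$ (from $p_0=(x^2-1)r$). This yields $\deg f,\deg g\le m-2$ and $\deg A\le m-1$ directly. Your Step 3 is nevertheless correct: the cancellation of the degree-$N$ parts, the divisibility $x^2+y^2\mid A_{N-1}$, and the substitution $(f,g,A)\mapsto(f+yh,\,g-xh,\,A-Ch)$ all check out. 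So with the fixed formulas the proof is complete either way.

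For comparison, the shortest route is the one hidden in your ``invariant'' remark. Since $xP+yQ$ vanishes at the origin and $C(0,0)=-1$, we get $K(0,0)=0$. Splitting $K$ into homogeneous components, write $\tfrac12K=xf+yg$ with $\deg f,\deg g\le m-2$. Then $x(P-Cf)+y(Q-Cg)=0$, and coprimality of $x$ and $y$ forces $P-Cf=Ay$ and $Q-Cg=-Ax$ with $\deg A\le m-1$. No division or degree reduction is needed.
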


 \begin{proposition}\cite[Theorem 1.1]{JaSo27}\label{cr:kolm3}
 Suppose that $\mathcal{X}=(P,Q)$ is a Kolmogorov vector field of degree $n$ in $\RR^2$. Then $\mathcal{X}$ is a vector field on $\mathbb{S}^1$ if and only if there exist $\xi, \zeta, C \in \RR[x,y]$ with degrees less than $(n-2)$ such that
\begin{equation}\label{cubic-kolm-form}
P=x((x^2+y^2-1) \xi + C y^2)\quad \mbox{and} \quad Q=y((x^2+y^2-1)\zeta - C x^2). 
\end{equation}
\end{proposition}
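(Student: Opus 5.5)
The plan is to prove both directions directly. The reverse direction is a computation, and the forward direction combines \Cref{prop:s1-char} with divisibility arguments that come from the Kolmogorov condition.

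\emph{Sufficiency.} Set $F=x^2+y^2-1$. With $P,Q$ as in \eqref{cubic-kolm-form}, $\mathcal{X}$ is clearly Kolmogorov. We compute
\[
\mathcal{X}F=2xP+2yQ=2F(x^2\xi+y^2\zeta)+2Cx^2y^2-2Cx^2y^2=2(x^2\xi+y^2\zeta)F,
\]
so $F=0$ is invariant with cofactor $2(x^2\xi+y^2\zeta)$. The degrees of $P$ and $Q$ are at most $n$ because of the degree bounds on $\xi,\zeta,C$.

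\emph{Necessity.} Let $\mathcal{X}=(xP',yQ')$ have degree $n$ and leave $\SN^1$ invariant. By \Cref{prop:s1-char} there are $f,g,A$ with $xP'=Ff+Ay$ and $yQ'=Fg-Ax$.
\begin{itemize}
\item Setting $x=0$ in the first identity gives $(y^2-1)f(0,y)+A(0,y)y=0$. Since $y$ and $y^2-1$ are coprime, $y\mid f(0,y)$, and so $A(0,y)=-(y^2-1)h(y)$ with $f(0,y)=y\,h(y)$.
\item Setting $y=0$ in the second identity gives $(x^2-1)g(x,0)=A(x,0)x$, so similarly $A(x,0)=(x^2-1)k(x)$ and $g(x,0)=x\,k(x)$.
\end{itemize}

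The idea is to use the freedom in the decomposition of \Cref{prop:s1-char} to normalize. For any polynomial $B$, the substitution $(f,g,A)\mapsto(f+By,\,g-Bx,\,A-BF)$ preserves $P$ and $Q$. Write $A=A(0,y)+A(x,0)-A(0,0)+xyC_1$. Both $A(0,y)$ and $A(x,0)$ are multiples of $F$ modulo terms divisible by $xy$: for instance, $y^2-1\equiv F \pmod{x^2}$, which needs more care. The aim is to choose $B$ so that the new $A$ is divisible by $xy$, say $A=xy\,C$.

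Once $A=xyC$, the first identity gives $x\mid Ff$, hence $x\mid f$ and $f=x\xi$. Likewise the second gives $y\mid g$, so $g=y\zeta$. Substituting yields exactly \eqref{cubic-kolm-form}, with the sign of $C$ adjusted. The degree bounds on $\xi,\zeta,C$ then follow from $\deg f,\deg g\le n-2$ and $\deg A\le n-1$.

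The main obstacle is the normalization step: finding $B$ with $A-BF\equiv 0$ modulo both $x$ and $y$. This means solving $B(0,y)(y^2-1)=A(0,y)=-(y^2-1)h(y)$ and $B(x,0)(x^2-1)=A(x,0)=(x^2-1)k(x)$ simultaneously. So we take $B(0,y)=-h(y)$ and $B(x,0)=k(x)$, which requires the compatibility condition $-h(0)=k(0)$. This condition should follow by evaluating at the origin: $A(0,0)=h(0)=-k(0)$. Then $B=-h(y)+k(x)-k(0)$ works.

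Finally, a degree check is needed so that $\deg B\le n-3$ and the new $A,f,g$ stay within the bounds. If the normalization produces terms of too high degree, the remedy is to compare top-degree homogeneous parts in the two identities to show that these terms cancel.
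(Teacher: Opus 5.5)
The paper does not prove this proposition; it is quoted from \cite{JaSo27}, so there is no in-paper argument to compare with. Judged on its own, your proof is correct.

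The sufficiency computation is right. In the necessity direction, each step checks out: the restrictions to $x=0$ and $y=0$, the freedom $(f,g,A)\mapsto(f+By,\,g-Bx,\,A-BF)$, the compatibility $A(0,0)=h(0)=-k(0)$, and the choice $B=-h(y)+k(x)-k(0)$. Together they give $xy\mid A-BF$, and then $x\mid f$ and $y\mid g$. No sign adjustment of $C$ is needed: $A=xyC$ produces \eqref{cubic-kolm-form} exactly.

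The degree check you left open is immediate, so your fallback is unnecessary. The polynomials $f(0,y)=yh(y)$ and $g(x,0)=xk(x)$ have degree at most $n-2$. Hence $\deg h,\deg k\le n-3$ and $\deg B\le n-3$. So $f+By$ and $g-Bx$ still have degree at most $n-2$, and $A-BF$ has degree at most $n-1$. This gives $\deg\xi,\deg\zeta,\deg C\le n-3$ directly. The remark about $y^2-1\equiv F\pmod{x^2}$ plays no role and can be deleted.

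For comparison, a more direct route starts from $x^2P'+y^2Q'=FM$ and uses $1=x^2+y^2-F$. This gives $P'=F(M-P')+y^2(P'-Q')$ and $Q'=F(M-Q')-x^2(P'-Q')$. But there $C=P'-Q'$ can have degree $n-1$. One then needs a separate top-degree reduction using $(\xi,\zeta,C)\mapsto(\xi+y^2D,\,\zeta-x^2D,\,C-FD)$. Your normalization avoids this by inheriting the degree bounds from \Cref{prop:s1-char}.
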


\begin{proposition}\label{prop:s1-limit-cycle}
Let $\SN^1=x^2+y^2-1=0$ be a periodic orbit of a quadratic planar field $\mathcal{X}=(P, Q)$. Then, that circle is a limit cycle of $\mathcal{X}$ if and only if $a \alpha + b \beta \neq 0$ where $\alpha, \beta$ are the constant terms of $P, Q$ respectively, and $a, -b$ are the coefficients of $xy$ in $P, Q$ respectively. Moreover, the circle is a hyperbolic limit cycle and the vector field has no other limit cycle.
\end{proposition}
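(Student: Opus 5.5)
The plan is to put the system into the normal form of Proposition~\ref{prop:s1-char} and then compute the characteristic exponent of the circle explicitly. For degree $2$, Proposition~\ref{prop:s1-char} makes $f,g$ constants and $A=a_0+a_1x+a_2y$ linear, so with $F=x^2+y^2-1$,
\[
P=Ff+(a_0+a_1x+a_2y)y,\qquad Q=Fg-(a_0+a_1x+a_2y)x .
\]
Reading off coefficients gives $\alpha=-f$, $\beta=-g$, $a=a_1$ and $b=a_2$. Hence $a\alpha+b\beta=-(a_1f+a_2g)$, and the condition in the statement is $a_1f+a_2g\neq 0$.

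Since $\SN^1$ is a periodic orbit, $A$ has no zero on the circle. Writing $r=\sqrt{a_1^2+a_2^2}$, this means $|a_0|>r$. On the circle $x=\cos\theta$, $y=\sin\theta$ the flow is $\dot\theta=-A(\theta)$. The cofactor of $F$ is $K=2(xf+yg)$, and by the standard Darboux fact the characteristic exponent of an invariant algebraic periodic orbit is $\oint K\,dt$. This gives, up to a sign,
\[
\int_0^{2\pi}\frac{2(f\cos\theta+g\sin\theta)}{a_0+a_1\cos\theta+a_2\sin\theta}\,d\theta .
\]
I would write $a_1\cos\theta+a_2\sin\theta=r\cos(\theta-\varphi)$ and use $\int_0^{2\pi}\cos u/(a_0+r\cos u)\,du=\frac{2\pi}{r}\bigl(1-\frac{|a_0|}{\sqrt{a_0^2-r^2}}\bigr)$, which is nonzero for $r\neq0$. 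The integral then equals a nonzero multiple of $(fa_1+ga_2)/r^2$ when $r\neq 0$, and equals $0$ when $r=0$. So if $a_1f+a_2g\neq0$, the circle is a hyperbolic, hence isolated, periodic orbit, i.e.\ a hyperbolic limit cycle.

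For the converse, assume $a_1f+a_2g=0$. A rotation of coordinates preserves both the form of the system and the quantity $a_1f+a_2g$, so I may rotate so that $g=0$. If $f\neq0$, this forces $a_1=0$. If $f=g=0$, then $F$ is a first integral and we are done immediately. In the case $g=0$, $a_1=0$, we have $P(-x,y)=P(x,y)$ and $Q(-x,y)=-Q(x,y)$. So the system is reversible with respect to $(x,y,t)\mapsto(-x,y,-t)$. The circle crosses the symmetry line $x=0$ transversally (because $A\neq0$ there), so every orbit starting near the circle on $x=0$ returns to $x=0$ and closes up by symmetry. Thus the circle lies in a period annulus and is not a limit cycle.

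The main obstacle is the claim that there is no other limit cycle. My first attempt will be a Dulac argument with $B=|F|^{-1}$. A direct computation gives $\operatorname{div}(BX)=|F|^{-1}(\operatorname{div}X-K)=|F|^{-1}(a_1y-a_2x)$, which has constant sign on each side of the line $a_1y=a_2x$ in each of the two components of $\RR^2\setminus\SN^1$. I would combine this with the classical facts about quadratic systems: a limit cycle cannot cross the invariant circle; every periodic orbit is convex and surrounds exactly one singular point, which is a focus; and a line meets a quadratic field's closed orbits in a controlled way. The aim is to show that any other cycle $\gamma$ together with $\SN^1$ (or $\gamma$ alone) bounds a region on which Green's theorem applied to $BX$ yields a contradiction. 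If the sign change along the line spoils this, I would instead try $B=|F|^{\lambda}L^{\mu}$, where $L$ is a suitable invariant line or the polar line of the singularity inside the circle, and choose $\lambda,\mu$ to make the divergence of one sign. This step is where I expect the real work to be.
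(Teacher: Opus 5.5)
Your treatment of the equivalence and of hyperbolicity is correct. It is also a genuinely different, self-contained route from the paper's, which obtains the criterion, hyperbolicity and uniqueness all at once by quoting Theorem~4 of \cite{dolov2012limit}. The identification $a\alpha+b\beta=-(a_1f+a_2g)$, the evaluation of $\oint K\,dt$ via $\int_0^{2\pi}\cos u/(a_0+r\cos u)\,du$, and the reversibility argument after rotating to $g=0$ all check out.

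The gap is the last assertion, that $\mathcal{X}$ has no other limit cycle. You name it as the main obstacle but do not prove it, and the route you sketch does not work as stated. With $B=|F|^{\lambda}$ one gets
\[
\operatorname{div}(B\mathcal{X})=|F|^{\lambda}\bigl((2(1+\lambda)f-a_2)x+(2(1+\lambda)g+a_1)y\bigr),
\]
so, your case $\lambda=-1$ included, the divergence always changes sign across a line through the origin. As $\lambda$ varies, this line runs through every line through the origin except $fx+gy=0$. That excluded line is precisely the one carrying all finite singular points, because $xP+yQ=F(fx+gy)$ and $F\neq 0$ at singularities.

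Hence this family can at best exclude cycles that miss some line through the origin. A cycle enclosing the origin meets every such line, so Green's theorem gives no contradiction. Examples are a cycle inside the disc around the interior focus that also encloses $(0,0)$, or a cycle surrounding the whole disc; in the latter case $B$ also fails to be smooth, or blows up, along $\SN^1$ inside the enclosed region. Your fallback $B=|F|^{\lambda}L^{\mu}$ is not specified, let alone checked.

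This uniqueness statement is the genuinely nontrivial content: it is the classical fact that a conic limit cycle of a quadratic system is its only limit cycle. You must either supply a complete argument for it or import it, as the paper does from Dolov's theorem.
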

\begin{proof}
 The vector field can be written as follows by Proposition \ref{prop:s1-char}. 
$$P=\alpha(x^2+y^2-1)+Ay,~Q=\beta(x^2+y^2-1)-Ax$$ where $\alpha,\beta\in \RR$ and $A$ is a polynomial of degree at most one, say $A=ax+by+c$. The circle $\SN^1$ contains no singular points as it is a periodic orbit. So, the line $A=0$ does not intersect $\SN^1$. Hence, by \cite[Theorem 4]{dolov2012limit}, $\SN^1$ is a limit cycle if and only if $\alpha (-A_x)+\beta (-A)_y=-(a\alpha +b \beta) \neq 0$. Note that $a, -b$ are the coefficients of $xy$ in $P, Q$ respectively. Moreover, $\SN^1$ is a hyperbolic limit cycle and the vector field has no other limit cycle.

\end{proof}

We note that Hilbert's 16th problem for algebraic limit cycles has been solved for concentric circles in \cite{Dol} and for non-concentric circles in \cite{llibre201616th, llibre201016th}.
\begin{proposition}\cite[Lemma 3]{Dol}\label{thm:hilbert-concentric}
A planar polynomial vector field of degree $n$ can have at most $\lfloor \frac{n}{2}\rfloor$ algebraic limit cycles which are concentric circles. Moreover, this bound can be reached. 
\end{proposition}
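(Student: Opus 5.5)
The statement is cited from \cite[Lemma 3]{Dol}, so we only sketch how we would reprove it. Assume the limit cycles are circles $C_i:(x-x_0)^2+(y-y_0)^2=r_i^2$, $i=1,\dots,m$, with common center and distinct radii. After a translation we may take the center to be the origin. Write $\rho=x^2+y^2$.

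\emph{Upper bound.} The plan is to show that each invariant circle forces a factor $(\rho-r_i^2)$ into the radial component $xP+yQ$, and then count degrees.

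First, invariance of $C_i$ means $\mathcal{X}(\rho-r_i^2)=2(xP+yQ)$ vanishes on $C_i$. Since $\rho-r_i^2$ is irreducible, $(\rho-r_i^2)\mid (xP+yQ)$.

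Next, the factors for different $i$ are coprime, so
$$xP+yQ=R\prod_{i=1}^m(\rho-r_i^2)$$
for some polynomial $R$. If $R\neq 0$, this gives $2m\le \deg(xP+yQ)\le n+1$.

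If $R=0$, then $xP+yQ\equiv 0$, so $P=yA$ and $Q=-xA$. Every circle centered at the origin is then invariant. Any periodic orbit is a circle, and the circles not meeting $\{A=0\}$ form a continuum of periodic orbits, so none of them is isolated. Thus there are no limit cycles, and we may assume $R\neq0$.

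\emph{Main obstacle: removing the extra $1$.} The degree count gives only $m\le\lfloor (n+1)/2\rfloor$, which is too weak by one when $n$ is odd. We need to show $2m\le n$.

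Suppose $n+1=2m$. Then $R$ is a nonzero constant, and the top-degree part of $xP+yQ$ is $c\rho^m$. Since the top-degree part of $xP+yQ$ is $xP_n+yQ_n$, where $P_n,Q_n$ are the degree-$n$ homogeneous parts, we get $xP_n+yQ_n=c\rho^{(n+1)/2}$.

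The idea is to use the fact that the circles are \emph{limit cycles}, not merely invariant, to reach a contradiction. In polar coordinates,
$$\dot r = rR\prod_i(r^2-r_i^2)/r^2,\qquad \dot\theta=\frac{xQ-yP}{r^2}.$$
The angular equation is not a priori controlled, so this is not yet a contradiction. What we would try is a divergence/area argument on the annuli between consecutive cycles. Alternatively, we would use the Dolov characterization quoted in the proof of Proposition \ref{prop:s1-limit-cycle}: hyperbolicity-type conditions via integrals of the divergence along each $C_i$. These integrals alternate in sign for nested cycles, which would need to be incompatible with a constant $R$.

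Honestly, this parity step is the part we expect to be genuinely delicate. If it resists, we would consult Dolov's argument.

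\emph{Sharpness.} Take $n=2m$ (or $2m+1$) and
$$P=-y+x\prod_{i=1}^m(\rho-i),\qquad Q=x+y\prod_{i=1}^m(\rho-i).$$
Then $\dot\theta=1$ and $\dot r=r\prod_i(r^2-i)$. Each circle $r=\sqrt i$ is a hyperbolic periodic orbit, since the derivative of $r\prod_i(r^2-i)$ is nonzero at simple roots. The vector field has degree $2m+1$, so this example realizes $m$ cycles in odd degree $n=2m+1$.

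For even degree $n=2m$ we would instead use a variant in which one factor of the radial term is reduced in degree, for example by multiplying the rotational part by a suitable polynomial. We would then verify that all $m$ circles remain isolated.
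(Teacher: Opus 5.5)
\textbf{Gap 1: the upper bound is unproved for odd $n$.} Your ``parity step'' is exactly where the argument stops. The divergence and alternating-sign route you sketch is never carried out, and it is not clear how it would use that $R$ is constant. The missing idea is elementary: evaluate at the common center. With the center at the origin, $xP+yQ$ vanishes at $(0,0)$, while $\prod_{i=1}^m(\rho-r_i^2)$ takes the value $\prod_i(-r_i^2)\neq 0$ there. Hence $R(0,0)=0$, so a nonzero $R$ is nonconstant and
\[
n+1\ \ge\ \deg(xP+yQ)\ =\ 2m+\deg R\ \ge\ 2m+1 ,
\]
that is, $m\le\lfloor n/2\rfloor$. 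The same fact is visible in the normal form behind Theorem \ref{inv-concentric-form}: iterating Proposition \ref{prop:s1-char} gives $R=x\tilde f+y\tilde g$, which has no constant term. Nothing about the limit-cycle property is needed beyond your correct treatment of the case $R=0$. The obstruction sits at the center, not in the annuli between the cycles. The paper itself only cites Dolov for this statement, so you have to supply this step yourself.

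\textbf{Gap 2: sharpness is shown only for odd $n$.} Your odd-degree example is fine. For $n=2m$ you only announce a ``variant'', and the details are not routine. Put $F=\prod_{i=1}^m(\rho-r_i^2)$. By Theorem \ref{inv-concentric-form}, degree $2m$ forces $P=\alpha F+Cy$ and $Q=\beta F-Cx$ with $\alpha,\beta\in\RR$. If $C$ is constant (say $P=F+y$, $Q=-x$), the system is reversible under $(x,y,t)\mapsto(-x,y,-t)$. Then each circle is surrounded by periodic orbits and none is a limit cycle; this is the degenerate case $a\alpha+b\beta=0$ of Proposition \ref{prop:s1-limit-cycle}.

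A working choice is $C=x+k$ with $k>\max_i r_i$, that is, $P=F+(x+k)y$ and $Q=-(x+k)x$, which has degree $2m$. On $C_i$ the field equals $(x+k)(y,-x)\neq 0$ and $\dot\theta=-(x+k)$. Also $F_x=2c_ix$ on $C_i$, where $c_i=\prod_{j\neq i}(r_i^2-r_j^2)\neq 0$. So the integral of the divergence $F_x+y$ over one period of $C_i$ is
\[
\int_0^{2\pi}\frac{2c_ir_i\cos\theta+r_i\sin\theta}{k+r_i\cos\theta}\,d\theta
=4\pi c_i\Bigl(1-\frac{k}{\sqrt{k^2-r_i^2}}\Bigr)\neq 0 .
\]
Thus every $C_i$ is a hyperbolic, hence isolated, periodic orbit. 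You need an explicit example of this kind, with this verification, before you can claim the bound is attained in even degree.
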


In the next result, we classify planar vector fields having several concentric invariant circles. 
\begin{theorem}\label{inv-concentric-form}
A degree $n$ vector field $\mathcal{X}=(P, Q)$ has $m (\leq \lfloor \frac{n}{2} \rfloor)$ many concentric invariant circles if and only if 
\begin{equation}\label{n/2-circle-form}
    P=\tilde{f} \prod_{i=1}^{m}(x^2+y^2-r_i^2)+Cy\quad \mbox{and}\quad Q=\tilde{g} \prod_{i=1}^{m}(x^2+y^2-r_i^2)-Cx
\end{equation}
 for some $\tilde{f},\tilde{g},C\in \RR[x,y]$ with $\deg \tilde{f}, \deg \tilde{g}\leq n-2m$ and $\deg C\leq n-1$. 
\end{theorem}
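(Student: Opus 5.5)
We treat the two directions separately; the sufficiency direction is a direct computation. Write $F_i=x^2+y^2-r_i^2$ with distinct radii $r_1,\dots,r_m>0$ (circles centered at the origin after a translation) and $\Phi=\prod_{i=1}^m F_i$. Since $\mathcal{X}F_i=2xP+2yQ$ and the terms $Cy$, $-Cx$ contribute $2x(Cy)+2y(-Cx)=0$, we get
$$\mathcal{X}F_i=2(x\tilde f+y\tilde g)\,\Phi=F_i\cdot 2(x\tilde f+y\tilde g)\prod_{j\neq i}F_j .$$
So each $F_i=0$ is invariant with a polynomial cofactor. The degree bounds $\deg\tilde f,\deg\tilde g\le n-2m$ and $\deg C\le n-1$ make $\deg P,\deg Q\le n$.

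For necessity, suppose $F_1=0,\dots,F_m=0$ are invariant. The key observation is that invariance of $F_i=0$ means $F_i$ divides $xP+yQ$ in $\RR[x,y]$. Indeed, $\mathcal{X}F_i=2(xP+yQ)=K_iF_i$ gives the divisibility directly. Alternatively, by Proposition \ref{prop:s1-char} applied after scaling, $xP+yQ$ vanishes on the circle $F_i=0$. Since $F_i$ is irreducible over $\RR$ and its real zero set is Zariski dense in its complex zero set (it is an infinite set), $F_i\mid xP+yQ$. The $F_i$ are pairwise coprime irreducibles, so
$$xP+yQ=\Phi\,h \quad\text{for some } h\in\RR[x,y],\qquad \deg h\le n+1-2m .$$

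It remains to decompose $h$ and solve for $P,Q$. Write $h=x\tilde f+y\tilde g+c_0$ with $\deg\tilde f,\deg\tilde g\le n-2m$ and $c_0$ constant. Then
$$x(P-\tilde f\Phi)+y(Q-\tilde g\Phi)=c_0\Phi .$$
The left side vanishes at the origin, while $\Phi(0,0)=\prod(-r_i^2)\neq 0$, so $c_0=0$. Setting $P_1=P-\tilde f\Phi$ and $Q_1=Q-\tilde g\Phi$, we get $xP_1=-yQ_1$. Since $x,y$ are coprime, $y\mid P_1$; write $P_1=Cy$, so $Q_1=-Cx$, with $\deg C\le n-1$. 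This gives exactly the form \eqref{n/2-circle-form}.

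The main obstacle is the divisibility step in the necessity direction, namely justifying that the product $\Phi$ divides $xP+yQ$. This needs irreducibility of each $F_i$ over $\RR$ (clear since $r_i>0$) and distinctness of the radii to ensure coprimality. The degree bookkeeping forces $2m\le n+1$, and the theorem's assumption $m\le\lfloor n/2\rfloor$ handles this. If the circles are not centered at the origin, we first translate to their common center; the form \eqref{n/2-circle-form} is stated in centered coordinates.
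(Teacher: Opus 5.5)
Your proof is correct, but it is organized differently from the paper's. The paper peels off one circle at a time. It uses Proposition \ref{prop:s1-char} to write $P=(x^2+y^2-1)f+Ay$, $Q=(x^2+y^2-1)g-Ax$, notes that the next circle $x^2+y^2-r^2$ must then divide $xf+yg$, re-applies Proposition \ref{prop:s1-char} (rescaled) to $(f,g)$, merges the new rotational term into $C$, and finishes ``inductively''. You instead obtain $\Phi\mid xP+yQ$ in one step from unique factorization. You then prove the structural decomposition directly: split the quotient as $x\tilde f+y\tilde g+c_0$, kill $c_0$ by evaluating at the common center where $\Phi(0,0)=\prod_i(-r_i^2)\neq 0$, and solve $xP_1=-yQ_1$ using the coprimality of $x$ and $y$. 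In effect you reprove Proposition \ref{prop:s1-char} for the whole product at once. Your argument is self-contained and makes the degree bounds explicit rather than leaving them to the induction. It also shows exactly where concentricity is used: all circles are centered at the one point where $x$ and $y$ vanish, and that point lies on none of them. The paper's version is shorter only because it outsources this step to the cited proposition. One minor inaccuracy: the degree count does not force $2m\le n+1$. If $xP+yQ\equiv 0$ (e.g.\ $P=y$, $Q=-x$), every circle centered at the origin is invariant. This is harmless, since $m\le\lfloor n/2\rfloor$ is a hypothesis, and that hypothesis is exactly what guarantees $n-2m\ge 0$ in your splitting of $h$.
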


\begin{proof}
Let $(0,0)$ be the common center of the invariant concentric circles and assume that $x^2+y^2-1=0$ is an invariant circle. So, by Proposition \ref{prop:s1-char}, we have the following. 
$$P=(x^2+y^2-1)f+Ay, \quad Q=(x^2+y^2-1)g-Ax$$ where $f, g, A\in \RR[x,y]$. Suppose that $C_r:=\{x^2+y^2-r^2=0\}=0$ is an invariant circle where $r\neq 1$. Then, $x^2+y^2-r^2$ divides $Px+Qy=(x^2+y^2-1)(fx+gy)$. Consequently, $x^2+y^2-r^2$ divides $fx+gy$. Thus, $(f,g)$ is a vector field on $C_r=0$. By applying  \Cref{prop:s1-char} for $\gamma(x,y)=x^2+y^2-r^2$, we can express
    $$f=(x^2+y^2-r^2)f_1+A_1 y,~g=(x^2+y^2-r^2)g_1-A_1 x$$ for some $f_1,g_1,A_1\in \RR[x,y]$. Therefore,
    $$P=(x^2+y^2-1)(x^2+y^2-r^2)f_1+By,~Q=(x^2+y^2-1)(x^2+y^2-r^2)g_1-Bx$$ for some $B\in \RR[x,y]$. Inductively, we obtain
    $$P=\tilde{f}\prod_{i=1}^{m}(x^2+y^2-r_i^2)+Cy\quad \mbox{and}\quad Q=\tilde{g}\prod_{i=1}^{m}(x^2+y^2-r_i^2)-Cx$$ for some $\tilde{f},\tilde{g},C\in \RR[x,y]$ with $\deg \tilde{f}, \deg \tilde{g}\leq n-2m$ and $\deg C \leq n-1$.

Conversely, one can verify that the vector field $(P,Q)$ has $m \leq \lfloor \frac{n}{2} \rfloor$ invariant concentric circles if $P$ and $Q$ are given by \eqref{n/2-circle-form}. 
\end{proof}

\begin{proposition}\label{vertices-triangle}
    Suppose that $\mathcal{X}=(P,Q)$ is a polynomial vector field such that it has invariant circle $f_i=0$ with cofactor $K_i$ and center $(a_i,b_i)$ for $i=1,2,3$. Assume that centers of the circles form a triangle. Then, $P$ and $Q$ have the following form.
\begin{equation*}
\begin{split}
        P&=\frac{1}{2\ell}((b_3-b_2)K_1f_1+(b_1-b_3)K_2f_2+(b_2-b_1)K_3f_3)\\
        Q&=-\frac{1}{2\ell}((a_3-a_2)K_1f_1+(a_1-a_3)K_2f_2+(a_2-a_1)K_3f_3)
\end{split}
\end{equation*}
    where $\ell=(a_1-a_2)(b_1-b_3)-(b_1-b_2)(a_1-a_3)$.
\end{proposition}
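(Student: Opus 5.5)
The plan is to use the invariance relations directly and solve a linear system for $P$ and $Q$. Write each circle as $f_i=(x-a_i)^2+(y-b_i)^2-r_i^2$. Invariance means $\mathcal{X}f_i=K_if_i$, and since $\partial_x f_i=2(x-a_i)$ and $\partial_y f_i=2(y-b_i)$, this reads
\begin{equation*}
2(x-a_i)P+2(y-b_i)Q=K_if_i,\qquad i=1,2,3.
\end{equation*}
This gives three linear equations in the two unknowns $P,Q$. The coefficients of $P$ and $Q$ are the linear forms $x-a_i$ and $y-b_i$, which depend on the point $(x,y)$.

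The first step is to eliminate the non-constant part of the coefficients. For $i=1,2,3$ set $E_i:=K_if_i-2xP-2yQ=-2a_iP-2b_iQ$. Subtracting the equation for $i=1$ from those for $i=2,3$ gives a system with constant coefficients:
\begin{equation*}
2(a_1-a_2)P+2(b_1-b_2)Q=K_2f_2-K_1f_1,\qquad 2(a_1-a_3)P+2(b_1-b_3)Q=K_3f_3-K_1f_1.
\end{equation*}
The determinant of this $2\times 2$ system is $4\ell$, where $\ell=(a_1-a_2)(b_1-b_3)-(b_1-b_2)(a_1-a_3)$. It is nonzero exactly because the three centers are not collinear, that is, they form a triangle. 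This is the only place the hypothesis is used.

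Next, solve by Cramer's rule:
\begin{align*}
P&=\frac{1}{2\ell}\bigl((b_1-b_3)(K_2f_2-K_1f_1)-(b_1-b_2)(K_3f_3-K_1f_1)\bigr),\\
Q&=\frac{1}{2\ell}\bigl((a_1-a_2)(K_3f_3-K_1f_1)-(a_1-a_3)(K_2f_2-K_1f_1)\bigr).
\end{align*}
Collecting terms in $K_if_i$ gives the coefficient of $K_1f_1$ in $P$ as $(b_1-b_2)-(b_1-b_3)=b_3-b_2$. The coefficients of $K_2f_2$ and $K_3f_3$ are $b_1-b_3$ and $b_2-b_1$. Similarly, the coefficients in $Q$ are $-(a_3-a_2)$, $-(a_1-a_3)$ and $-(a_2-a_1)$ after checking signs. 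Together these give the stated formulas.

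I expect no real obstacle. The only care needed is the sign bookkeeping when regrouping the Cramer's-rule expressions, and confirming that the constant-coefficient system is equivalent to the original one (it is a consequence of it, which is all that is needed to express $P$ and $Q$).
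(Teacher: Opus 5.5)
Your proposal is correct and follows the same route as the paper. You subtract the invariance relations $2(x-a_i)P+2(y-b_i)Q=K_if_i$ pairwise to get a constant-coefficient $2\times 2$ system with determinant a nonzero multiple of $\ell$, then solve it; your Cramer's-rule coefficients and signs all check out, and the unused auxiliary quantity $E_i$ can simply be dropped.
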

\begin{proof}
Let $f_i=(x-a_i)^2+(y-b_i)^2 -r_i^2$ for $i=1,2,3$. By the hypothesis, for $i=1,2,3,$ we have the relation $$2(x-a_i)P+2(y-b_i)Q=K_if_i,$$ which implies that
$$Px+Qy=a_iP+b_iQ+\frac{1}{2}K_if_i.$$
Hence, we obtain the following system of equations.
\begin{equation*}
    \begin{cases}
    \begin{split}
        a_1P+b_1Q+\frac{1}{2}K_1f_1&=a_2P+b_2Q+\frac{1}{2}K_2f_2\\
        a_1P+b_1Q+\frac{1}{2}K_1f_1&=a_3P+b_3Q+\frac{1}{2}K_3f_3.
    \end{split}
    \end{cases}
\end{equation*}
Thus,
$$\begin{pmatrix}
    a_1-a_2 & b_1-b_2\\
    a_1-a_3 & b_1-b_3
\end{pmatrix} \begin{pmatrix}
    P\\
    Q
\end{pmatrix}=\frac{1}{2}\begin{pmatrix}
    K_2f_2-K_1f_1\\
    K_3f_3-K_1f_1
\end{pmatrix}.$$ Let $\ell:=(a_1-a_2)(b_1-b_3)-(b_1-b_2)(a_1-a_3)$. Note that $\ell \neq 0$ because the centers of the circles form a triangle. Thus, $P$ and $Q$ have unique solution as follows.
$$\begin{pmatrix}
    P\\
    Q
\end{pmatrix}=\frac{1}{2\ell} \begin{pmatrix}
     b_1-b_3 & b_2-b_1\\
    a_3-a_1 & a_1-a_2
\end{pmatrix} \begin{pmatrix}
    K_2f_2-K_1f_1\\
    K_3f_3-K_1f_1
\end{pmatrix}.$$
This completes the proof.
\end{proof}

Let $f, g \in \RR[x,y]$. Then,  the determinant of Jacobian matrix of $f$ and $g$ is $\{f,g\}=\frac{\partial f}{\partial x} \frac{\partial g}{\partial y} - \frac{\partial f}{\partial y} \frac{\partial g}{\partial x}$. 

\begin{proposition}\label{s+1-circles-counter}
Let $f_1:= x^2+y^2-1, f_2:= (x-a)^2 + y^2+a^2-1$, $ f_j:= x^2+y^2-r^2_j$, for $3\le j \le s$, and $f_{s+1}:= (x-\frac{a}{2})^2 + y^2 +\frac{3a^2}{4}-1$, where $a\in (0,1)$. 
Then,$\{f_i, f_j\}$ is non-zero for $i, j \in \{1, \ldots, s+1\}$ with $i \neq j$. 
Moreover,  $f_1=0, \ldots, f_{s+1}=0$ are invariant circles for the following vector field.
\begin{equation}\label{equivalent-counter1}
    \begin{split}
        P&= (f_1 - f_2)f_3f_4\cdots f_s y\\
        Q &= -(f_1-f_2)f_3f_4\cdots f_s x + af_1f_3f_4\cdots f_s 
    \end{split}
\end{equation}
\end{proposition}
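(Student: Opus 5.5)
The plan is to verify the proposition by direct computation, treating the two assertions separately. Every $f_i$ is a circle centred on the $x$-axis, say $f_i=(x-c_i)^2+y^2-\rho_i^2$. The centres are $c_1=c_j=0$ for $3\le j\le s$, $c_2=a$ and $c_{s+1}=a/2$.

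\emph{Jacobian brackets.} Since $\partial_x f_i=2(x-c_i)$ and $\partial_y f_i=2y$, we get
\[
\{f_i,f_j\}=2(x-c_i)\cdot 2y-2y\cdot 2(x-c_j)=4(c_j-c_i)\,y .
\]
This is a non-zero polynomial exactly when $c_i\neq c_j$. For pairs with distinct centres ($f_1,f_2,f_{s+1}$ against each other, and $f_2$ or $f_{s+1}$ against any $f_j$ with $j\ge 3$), non-vanishing follows from $a\neq 0$ and $a\neq a/2$. However, $f_1,f_3,\dots,f_s$ are concentric at the origin, so the bracket of any two of them vanishes identically. I would therefore state this part of the claim for pairs with distinct centres and flag the concentric pairs as an exception that should be corrected in the statement. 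I expect this bookkeeping to be the only delicate point.

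I would also check that the circles are genuine real circles. Their squared radii are $1$, $1-a^2$, $r_j^2$ and $1-\tfrac{3a^2}{4}$, and these are positive for $a\in(0,1)$.

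\emph{Invariance.} First simplify: $f_1-f_2=2a(x-a)$. Writing $F=f_3\cdots f_s$, the field \eqref{equivalent-counter1} becomes
\[
\mathcal{X}=aF\bigl(2(x-a)y,\ -2(x-a)x+f_1\bigr).
\]
For a circle centred at $(c,0)$ we compute $\mathcal{X}f=2(x-c)P+2yQ$. This equals
\[
2aFy\bigl(2(x-c)(x-a)-2x(x-a)+f_1\bigr)=2aFy\bigl(-2c(x-a)+f_1\bigr).
\]
For $c=0$ this is $2aFy\,f_1$, so $f_1=0$ is invariant with cofactor $2aFy$. Each $f_j$ with $j\ge 3$ divides $F$, so $f_j=0$ is invariant with cofactor $2ay f_1\prod_{k\ne j,\,k\ge 3}f_k$. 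For $c=a$ the bracket is $f_1-2ax+2a^2=f_2$, giving cofactor $2aFy$ for $f_2$. For $c=a/2$ it is $f_1-ax+a^2=(x-\tfrac a2)^2+y^2+\tfrac{3a^2}{4}-1=f_{s+1}$, again with cofactor $2aFy$.

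This completes the plan. All identities are elementary polynomial algebra.
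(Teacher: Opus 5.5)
Your invariance computation is correct and is the same as the paper's. The paper evaluates $2(x-c)P+2yQ$ separately for $c=a,0,\tfrac a2$, using $f_{s+1}=\tfrac12(f_1+f_2)$, while you package the three cases into the single identity $2(x-c)P+2yQ=2aFy\,\bigl(f_1-2c(x-a)\bigr)$.

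Your caveat on the brackets is also correct, and it exposes an error in the paper, whose proof asserts $\{f_i,f_j\}=4ay$ for every pair. In fact $\{f_i,f_j\}=4(c_j-c_i)y$. This equals $\pm 2ay$ for pairs involving $f_{s+1}$ and vanishes identically for any two of the concentric circles $f_1,f_3,\dots,f_s$. So the first assertion holds as stated only when $s=2$, and otherwise must be restricted to pairs with distinct centres. The later use of $\{f_1,f_2\}=4ay\neq 0$ in Proposition~\ref{thm_not_equiv} is unaffected.
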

\begin{proof}
The first part follows from the fact that $\{f_i, f_j\}= 4ay\neq 0$. 

Now, $2(x-a)P+2y Q= 2a yf_2 \cdots f_s$, and $2xP+2y Q= 2a yf_1 f_3 \cdots f_s$. Also,
\begin{equation*}
    \begin{split}\label{equivalent-counter}
 2(x-\frac{a}{2})P + 2yQ &= ay(f_1 + f_2)f_3f_4\cdots f_s= 2ayf_{s+1}f_3f_4\cdots f_s 
    \end{split}
\end{equation*}
So, $f_j=0$ is an invariant circle for $(P, Q)$ where $j=1, \ldots, s+1$.
\end{proof}

We note that \cite[Theorem 1]{llibre201616th} states that $f_1=0, \ldots, f_{s+1}=0$ are invariant circles for the vector field 
\begin{equation}\label{eq_4inv_circ}
\begin{split}
P(x,y)&=-\lambda_{s+2}\prod_{m=1}^{s+1}f_m+\sum_{j=1}^{s+1}\lambda_j \Big(\prod_{m=1,m\neq j}^{s+1}f_m\Big)\{f_j,x\}\\
Q(x,y)&=\lambda_{s+3}\prod_{m=1}^{s+1}f_m+\sum_{j=1}^{s+1}\lambda_j \Big(\prod_{m=1,m\neq j}^{s+1}f_m\Big)\{f_j,y\},
\end{split}
\end{equation}
where $\lambda_1,\ldots, \lambda_{s+3}$ are polynomials in $\RR[x,y]$.

\begin{proposition}\label{thm_not_equiv}
There is no $\lambda_1, \ldots, \lambda_{s+3} \in \RR[x, y]$ such that the vector field in \eqref{equivalent-counter1} can be written as in \eqref{eq_4inv_circ}. 
\end{proposition}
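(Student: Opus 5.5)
The plan is to find a point $p$ at which every vector field of the form \eqref{eq_4inv_circ} vanishes to order at least two, and then to check that the field \eqref{equivalent-counter1} vanishes only to order one there. A direct computation gives $f_1-f_2=2a(x-a)$ and $f_1-f_{s+1}=a(x-a)$. So the circles $f_1=0$, $f_2=0$, $f_{s+1}=0$ all pass through the two points $p_\pm=(a,\pm y_0)$, where $y_0=\sqrt{1-a^2}\neq 0$ because $a\in(0,1)$. On the other hand, $f_j(p_\pm)=1-r_j^2\neq 0$ for $3\le j\le s$. Here I assume, as is implicit in the statement, that the circles are distinct, so that $r_j\neq 1$.

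First I show that any field of the form \eqref{eq_4inv_circ} lies in $\mathfrak m_p^2$, where $\mathfrak m_p$ is the ideal of polynomials vanishing at $p=p_+$. The terms $\lambda_{s+2}\prod_{m} f_m$ and $\lambda_{s+3}\prod_m f_m$ contain all three of $f_1,f_2,f_{s+1}$. Each summand $\lambda_j\big(\prod_{m\neq j}f_m\big)\{f_j,x\}$ (and likewise with $y$) omits only one factor, so it still contains at least two of $f_1,f_2,f_{s+1}$. Each of these lies in $\mathfrak m_p$, so every term lies in $\mathfrak m_p^2$. Hence both $P$ and $Q$ would vanish at $p$ together with their first partial derivatives.

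Next I compute the linear part of $Q$ from \eqref{equivalent-counter1} at $p$. Put $c=\prod_{j=3}^s f_j(p)\neq 0$. The linear parts at $p$ are:
\begin{itemize}
\item $2a(x-a)$ for $f_1-f_2$;
\item $2a(x-a)+2y_0(y-y_0)$ for $f_1$, since $\nabla f_1(p)=(2a,2y_0)$.
\end{itemize}
Both $f_1-f_2$ and $f_1$ vanish at $p$, so only these linear parts, multiplied by the values of the remaining factors at $p$, contribute. This gives the linear part of $Q$ as
\[
c\bigl(-a\cdot 2a(x-a)+a\,(2a(x-a)+2y_0(y-y_0))\bigr)=2acy_0\,(y-y_0).
\]
This is nonzero because $a\neq0$, $c\neq0$ and $y_0\neq0$. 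Thus $Q\notin\mathfrak m_p^2$, which contradicts the previous step. So no choice of $\lambda_1,\ldots,\lambda_{s+3}$ can work.

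The main obstacle is spotting that the three circles form a pencil through common points, so that simply evaluating at intersection points does not distinguish the two forms. The argument has to pass to first-order (second-vanishing-order) information at those points. The remaining steps are routine; the only care needed is to confirm that $f_3,\dots,f_s$ do not vanish at $p$, which is where the distinctness of the circles enters.
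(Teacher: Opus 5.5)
Your argument is correct, and it takes a genuinely different route from the paper's.

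The paper argues globally. It divides \eqref{eq_4inv_circ} by $\prod_i f_i$ and treats the two component identities as a linear system over $\RR(x,y)$ in $\lambda_1,\lambda_2$. It then solves for $\lambda_2$ by Cramer's rule, which is where $xP+yQ=ay\,f_1f_3\cdots f_s$ is used. This shows that $\lambda_2$ can be a polynomial only if $ay(f_2\lambda_{s+1}+1)/f_{s+1}$ is one. Restricting to the line $x=a$, on which $f_2$ and $f_{s+1}$ agree, would then force $y^2+a^2-1$ to divide $ay$, which is absurd.

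You argue locally instead. You work at a point $p$ common to $f_1=0$, $f_2=0$ and $f_{s+1}=0$, and compare $1$-jets there. This is the same coincidence the paper exploits implicitly through $f_2(a,y)=f_{s+1}(a,y)$.

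What your version buys:
\begin{itemize}
\item It is shorter and avoids all rational-function bookkeeping.
\item It isolates the structural obstruction: the form \eqref{eq_4inv_circ} forces a singular point with zero linear part wherever three of the circles meet, while the given field has nonzero linear part at $p$.
\item It is even quicker with $P=2a(x-a)y\,f_3\cdots f_s$, whose $x$-derivative at $p$ is $2acy_0\neq 0$.
\end{itemize}

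What the paper's version buys: its computation never evaluates $f_3,\dots,f_s$ at $p$, so it needs no hypothesis on the $r_j$.

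Your assumption $r_j\neq 1$ is harmless, and you can drop it by counting orders. If $k$ of the $f_j$ coincide with $f_1$, then $k+3$ of $f_1,\dots,f_{s+1}$ vanish at $p$. Every term of \eqref{eq_4inv_circ} omits at most one factor, so it lies in $\mathfrak m_p^{k+2}$. Meanwhile $P$ vanishes at $p$ to order exactly $k+1$, so the same contradiction follows.
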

\begin{proof}
Suppose that
 $\mathcal{X}=(P, Q)$ of \ref{equivalent-counter1} can be written in the form \eqref{eq_4inv_circ}. This leads to the following system of two linear equations in $\lambda_1$ and $\lambda_2$.
\begin{equation*}
    \begin{cases}
    \begin{split}
         -\frac{2y}{ f_1}\lambda_1 -  \frac{2y}{ f_2}\lambda_2 &= \lambda_{s+2} + 2y \sum_{j=3}^{s+1} \frac{\lambda_j}{f_j} + \frac{P}{\prod_{i =1}^{s+1}f_i} \\
        \frac{2x}{f_1} \lambda_1+ \frac{2(x-a)}{f_2} \lambda_2 &=-\lambda_{s+3} -2x \sum_{j=3}^s \frac{\lambda_j}{f_j} -2(x-\frac{ a}{2}) \frac{\lambda_{s+1}}{f_{s+1}} + \frac{Q}{\prod_{i=1}^{s+1}f_i} 
    \end{split}
    \end{cases}
\end{equation*}
Hence, we obtain
\begin{equation*}
    \begin{split}
        \lambda_2 &=\frac{f_1f_2}{4ay} \begin{vmatrix}
        -\frac{2y}{f_1} &
\lambda_{s+2} + 2y \sum_{j=3}^{s+1} \frac{\lambda_j}{f_j} + \frac{P}{\prod_{i =1}^{s+1}f_i} \\
\frac{2x}{f_1} &  -\lambda_{s+3} -2x \sum_{j=3}^s \frac{\lambda_j}{f_j} -2(x-\frac{ a}{2}) \frac{\lambda_{s+1}}{f_{s+1}} + \frac{Q}{\prod_{i=1}^{s+1}f_i}
\end{vmatrix}\\
&=\frac{f_2}{2ay} ((\lambda_{s+3} y-\lambda_{s+2} x) - ay \frac{\lambda_{s+1}}{f_{s+1}} - \frac{Px+Qy}{\prod_{i=1}^{s+1} f_i})\\
&=\frac{f_2}{2ay} ((\lambda_{s+3} y-\lambda_{s+2} x) - ay \frac{\lambda_{s+1}}{f_{s+1}} - \frac{ay}{f_2f_{s+1}})\\
&=\frac{1}{2ay}(f_2(\lambda_{s+3} y-\lambda_{s+2} x) - \frac{ay(f_2\lambda_{s+1}+ 1)}{f_{s+1}})
    \end{split}
\end{equation*}
So, $\lambda_2 \in \RR[x,y]$ if and only if $\frac{ay(f_2\lambda_{s+1}  +1)}{ f_{s+1}} \in \RR[x,y]$. Suppose that $q(x,y):=\frac{ay(f_2\lambda_{s+1}  +1)}{ f_{s+1}}\in \RR[x,y]$. Then, $q(a,y)\in \RR[y]$, which implies that $\frac{ay}{y^2+a^2-1}\in \RR[y]$. This arises a contradiction. Therefore, $\lambda_2\notin \RR[x,y]$. Hence, the proof is complete.
\end{proof}

\begin{proposition}\label{prop:disprove_15}
Let $f_1:= x^2+y^2-1, f_2:= (x-a)^2 + y^2+a^2-1$, $ f_3:= (x+\frac{a}{2})^2+y^2-(1+\frac{5a^2}{4})$, and $f_4:= (x-\frac{a}{2})^2 + y^2 +\frac{3a^2}{4}-1$, where $a\in (0,\frac{2}{\sqrt{3}})$. Then, the degree 4 vector field $\mathcal{X}=(P,Q)$ with 
\begin{equation}\label{equivalent-counter2}
    \begin{split}
        P&= (f_1 - f_2)f_3 y\\
        Q &= -(f_1-f_2)f_3 x + af_1f_3 
    \end{split}
\end{equation}
has 4 invariant circles $f_1=0,\ldots,f_4=0$. Moreover,
there exist no $\lambda_1,\ldots,\lambda_4\in \RR$ such that $\mathcal{X}=\prod\limits_{i=1}^4 f_i\sum\limits_{n=1}^4 \lambda_n \{\log |f_n|,*\}$.
\end{proposition}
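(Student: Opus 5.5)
The plan is to verify the invariance of the four circles by direct computation, in the style of Proposition \ref{s+1-circles-counter}, and then to rule out the logarithmic form by restricting both sides to the circle $f_1=0$.

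\textbf{Invariance.} First simplify $f_1-f_2$. Expanding gives $f_2=f_1-2ax+2a^2$, so $f_1-f_2=2a(x-a)$. Next, for each circle compute $2(x-c)P+2(y-d)Q$, where $(c,d)$ is its center.

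For the center $(0,0)$, the terms involving $(f_1-f_2)f_3$ cancel, so $xP+yQ=ayf_1f_3$. This is a multiple of $f_1$, so $f_1=0$ is invariant.

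For the center $(a,0)$ we get $(x-a)P+yQ=ayf_1f_3-a(f_1-f_2)f_3y=ayf_2f_3$, so $f_2=0$ is invariant.

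For the center $(\tfrac a2,0)$, the expression is the average of the two previous ones:
$$(x-\tfrac a2)P+yQ=\tfrac12 ayf_3(f_1+f_2).$$
One checks that $\tfrac12(f_1+f_2)=(x-\tfrac a2)^2+y^2+\tfrac{3a^2}{4}-1=f_4$. So this equals $ayf_3f_4$, and $f_4=0$ is invariant.

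For the center $(-\tfrac a2,0)$, write $(x+\tfrac a2)P+yQ=(xP+yQ)+\tfrac a2P$. Both summands contain the factor $f_3$, so $f_3=0$ is invariant.

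The range $a\in(0,\tfrac2{\sqrt3})$ guarantees that the radii of $f_1$, $f_3$ and $f_4$ are positive. I will state the invariance for all four curves as invariant algebraic sets in the sense of the introduction, and I will only need that $f_1=0$ is a genuine circle.

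\textbf{No logarithmic representation.} Expand the expression using $\{\log|f_n|,*\}=\{f_n,*\}/f_n$:
$$\prod_{i=1}^4 f_i\sum_{n=1}^4\lambda_n\{\log|f_n|,*\}=\sum_{n=1}^4\lambda_n\Big(\prod_{m\neq n}f_m\Big)\{f_n,*\}.$$
Its first component is $-\sum_n\lambda_n\big(\prod_{m\ne n}f_m\big)\partial_yf_n$.

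Suppose the identity held for some real $\lambda_n$. Restrict the first component to the real circle $f_1=0$. Every term with $n\neq1$ contains the factor $f_1$, so only $-2y\lambda_1f_2f_3f_4$ survives. On the other hand, on $f_1=0$ we have $P=(f_1-f_2)f_3y=-f_2f_3y$. Hence
$$f_2f_3y\,(2\lambda_1f_4-1)=0 \quad\text{on } f_1=0.$$
The polynomial $f_1$ is irreducible and none of $f_2,f_3,y$ vanishes identically on the circle, since each meets it in finitely many points. Therefore $2\lambda_1f_4\equiv1$ on the whole circle.

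But $f_4=f_1-ax+a^2$, so on the circle $f_4=a^2-ax$. Because $a\neq0$, this is non-constant, and $2\lambda_1(a^2-ax)=1$ cannot hold at every point of the circle, whatever the value of $\lambda_1$, including $\lambda_1=0$. This contradiction proves the claim.

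The step needing the most care is the justification that the identity holding on an open arc of the circle forces it on the whole circle, which is the density/irreducibility argument above. Once that is in place, everything else is routine expansion.
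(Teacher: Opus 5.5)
Your proof is correct. The invariance computation is the one the paper has in mind (it simply refers back to Proposition \ref{s+1-circles-counter}), but your argument for the second claim takes a different route.

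The paper divides both components of the identity by $\prod_i f_i$ and treats them as a $2\times 2$ linear system in $\lambda_1,\lambda_2$. It solves for $\lambda_2$ by Cramer's rule, using $xP+yQ=ayf_1f_3$. It then evaluates the resulting rational expression at the two points $(a\pm\sqrt{1-a^2},0)$ of $f_2=0$. There the $\lambda_3,\lambda_4$ terms drop out, and two different values of the constant $\lambda_2$ appear.

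You use only the first component and restrict it to $f_1=0$. This removes the $\lambda_2,\lambda_3,\lambda_4$ terms in one stroke and leaves $2\lambda_1 a(a-x)=1$ on the circle, off finitely many points.

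Your route is shorter and works for every $a\neq 0$, hence for the full stated range $a\in(0,\frac{2}{\sqrt3})$. The paper's evaluation points are real only for $a\le 1$. At $a=1$ they merge into $(1,0)$, where $f_4$ vanishes, so as written the paper's argument covers only $a\in(0,1)$; for $a>1$ it could be rescued by evaluating at complex points. Your remark that $f_2=0$ is a genuine circle only for $a<1$ is likewise on point.

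What the paper's elimination buys is a uniform template with Proposition \ref{thm_not_equiv}. There the $\lambda_j$ are polynomials, and an explicit formula for the forced $\lambda_2$ is the natural output.

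The step you flag as delicate is lighter than you suggest. The identity $2\lambda_1 a(a-x)=1$ at infinitely many points of the circle with distinct $x$-coordinates already forces $\lambda_1 a=0$ and then $0=1$. No irreducibility argument is needed.
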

\begin{proof}
Using similar arguments as in the proof of \Cref{s+1-circles-counter}, one can prove that $f_1=0,\ldots,f_4=0$ are invariant circles for the vector field $\mathcal{X}=(P,Q)$ where $P$ and $Q$ are given by \eqref{equivalent-counter2}.

Suppose that $\mathcal{X}=(P,Q)$ of \eqref{equivalent-counter2} can be expressed as $\mathcal{X}=\prod\limits_{i=1}^4 f_i\sum\limits_{n=1}^4 \lambda_n \{\log |f_n|,*\}$. This leads to a system of two linear equations in $\lambda_1,\lambda_2$.
\begin{equation*}
    \begin{cases}
    \begin{split}
         -\frac{2y}{ f_1}\lambda_1 -  \frac{ 2y}{ f_2}\lambda_2 &= 2y \Big(\frac{\lambda_3}{f_3}+\frac{\lambda_4}{f_4}\Big) + \frac{P}{\prod_{i =1}^{4}f_i} \\
        \frac{2x}{f_1} \lambda_1+ \frac{2(x-a)}{f_2} \lambda_2 &=-2(x+\frac{a}{2})\frac{\lambda_3}{f_3} -2(x-\frac{ a}{2}) \frac{\lambda_4}{f_4} + \frac{Q}{\prod_{i=1}^{4}f_i}
    \end{split}
    \end{cases}
\end{equation*}
Hence, we obtain
\begin{equation*}
    \begin{split}
        \lambda_2 &=\frac{f_1f_2}{4ay} \begin{vmatrix}
        -\frac{2y}{f_1} & 2y \Big(\frac{\lambda_3}{f_3}+\frac{\lambda_4}{f_4}\Big) + \frac{P}{\prod_{i =1}^{4}f_i} \\
\frac{2x}{f_1} &  -2(x+\frac{a}{2})\frac{\lambda_3}{f_3} -2(x-\frac{ a}{2}) \frac{\lambda_4}{f_4} + \frac{Q}{\prod_{i=1}^{4}f_i}
\end{vmatrix}\\
&=-\frac{f_2}{ay} ( \frac{ay}{2} (-\frac{\lambda_3}{f_3}+\frac{\lambda_4}{f_4}) + \frac{Px+Qy}{\prod_{i=1}^4 f_i})\\
&=-\frac{f_2}{ay} ( \frac{ay}{2} (-\frac{\lambda_3}{f_3}+\frac{\lambda_4}{f_4}) + \frac{ay}{f_2f_4})\\
&=-f_2(-\frac{\lambda_3}{f_3}+\frac{\lambda_4}{f_4}) - \frac{1}{f_4}.
    \end{split}
\end{equation*}
For $x=a+\sqrt{1-a^2},y=0$ and $a-\sqrt{1-a^2},y=0$, we obtain $\lambda_2$ as $-\frac{1}{(\frac{a}{2}+\sqrt{1-a^2})^2 +\frac{3a^2}{4}-1}$ and $-\frac{1}{(\frac{a}{2}-\sqrt{1-a^2})^2 +\frac{3a^2}{4}-1}$, respectively. However, this is impossible since $\lambda_2$ is a constant. Hence, the proof is complete.
\end{proof}

\section{Circular limit cycles of cubic and quartic vector fields} \label{sec:cubic}
In this section, We study circular limit cycles of cubic and quartic planar vector fields. We recall that a quartic polynomial vector field can have at most 2 concentric limit cycles; see \Cref{thm:hilbert-concentric}.

\begin{theorem}\label{thm:two-con-quartic}
If a planar quartic vector field has two concentric circular limit cycles, then the vector field has at most three circular limit cycles. Moreover, the bound is sharp.
\end{theorem}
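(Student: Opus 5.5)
We normalize the common center to the origin, so the two concentric circular limit cycles are $x^2+y^2=r_1^2$ and $x^2+y^2=r_2^2$ with $0<r_1<r_2$. Since the degree is $n=4$ and $m=2=\lfloor 4/2\rfloor$, \Cref{inv-concentric-form} gives
\[
P=\tilde f\,(x^2+y^2-r_1^2)(x^2+y^2-r_2^2)+Cy,\qquad Q=\tilde g\,(x^2+y^2-r_1^2)(x^2+y^2-r_2^2)-Cx,
\]
with $\tilde f,\tilde g\in\RR$ constants and $\deg C\le 3$. Write $F=(x^2+y^2-r_1^2)(x^2+y^2-r_2^2)$. Both circles are periodic orbits, so $C$ has no zeros on either circle. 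Also, $(\tilde f,\tilde g)\neq(0,0)$, since otherwise $\mathcal{X}=C(y,-x)$ and every circle centered at the origin avoiding $C=0$ would be a periodic orbit, so the two circles would not be isolated. After a rotation about the origin, we may assume $\tilde g=0$ and $\tilde f\neq 0$, so that $P=\tilde f F+Cy$ and $Q=-Cx$.

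Now let $\{(x-a)^2+(y-b)^2=\rho^2\}$ be any further circular limit cycle. It cannot be centered at the origin: by \Cref{thm:hilbert-concentric} a quartic field has at most two concentric circular limit cycles, or more directly, a third concentric invariant circle would force $\deg F\ge 6$ by \Cref{inv-concentric-form}. So $(a,b)\neq(0,0)$. Invariance means that $g:=(x-a)^2+(y-b)^2-\rho^2$ divides
\[
(x-a)P+(y-b)Q = xP+yQ-(aP+bQ)=\tilde f xF-a\tilde fF-(ay-bx)C .
\]
Since $xP+yQ=\tilde f xF$, this says that $g$ divides $\tilde f(x-a)F-(ay-bx)C$. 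This is the key constraint. I would compare it with the analogous identities for two hypothetical extra circles $g_1,g_2$ with centers $(a_1,b_1)$ and $(a_2,b_2)$. Each identity expresses $C$ on the curve $g_i=0$ as $\tilde f(x-a_i)F/(a_iy-b_ix)$. In the spirit of \Cref{vertices-triangle}, eliminating $C$ between two such relations should give a polynomial relation of degree at most $5$ that must vanish on both circles. That should force either the two centers to coincide with each other, or both to lie on a common line through the origin with very restricted radii.

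The main obstacle is excluding two extra circles. I would split into two cases. First, if the extra centers are non-collinear with the origin, then \Cref{vertices-triangle} applied to one of the concentric circles and the two extra circles gives $P,Q$ as explicit combinations $\sum K_if_i$. Comparing with the rotated normal form ($Q=-Cx$, $\deg C\le 3$) via degree and top-homogeneous-part counting should be contradictory. Second, if the extra centers are collinear with the origin, I would rotate so that they lie on the $x$-axis and substitute a parametrization of each circle into the divisibility condition. Counting the intersection points of $g_i=0$ with the quartic $F$ and with the line $ay-bx=0$ should then show that $C$ must vanish somewhere on one of the circles, contradicting periodicity. Two non-concentric circles can also meet in at most two points, so nested-limit-cycle configurations limit the options further.

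For sharpness, I would follow the example in \Cref{prop:disprove_15}. I would take $C$ of degree $\le 3$ chosen so that a third circle $g=(x-a)^2+y^2-\rho^2$ satisfies the divisibility condition, for instance $C=\tilde f(x-a)F/(ay)$ modulo $g$, suitably adjusted. I would then check that $C\neq0$ on all three circles and verify that each circle is hyperbolic by computing $\oint \operatorname{div}\mathcal{X}\,dt\neq 0$ along it. Hyperbolicity guarantees that each periodic orbit is isolated, which yields a quartic field with exactly three circular limit cycles.
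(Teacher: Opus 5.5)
\paragraph{The gap: periodicity alone cannot exclude a fourth circle.}
Your normal form $P=\tilde fF+Cy$, $Q=-Cx$ and the divisibility condition $g\mid \tilde f(x-a)F-(ay-bx)C$ are correct. The problem is the mechanism you propose for excluding a fourth circle. You only plan to use invariance, disjointness of the circles, and the fact that $C$ has no zeros on them (periodicity). You never use that the circles are \emph{isolated}, and these facts are consistent with four circles. Take $F=(x^2+y^2-1)(x^2+y^2-4)$, $g_3=x^2+(y-\tfrac{1}{10})^2-9$, $g_4=x^2+(y-\tfrac{1}{10})^2-16$, $C=10(g_3g_4-F)$ (which has degree $3$), and $P=F+Cy$, $Q=-Cx$. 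Then
\[
\mathcal{X}(x^2+y^2-r^2)=2xF\quad (r=1,2),\qquad \mathcal{X}g_j=2x\bigl(F+\tfrac{1}{10}C\bigr)=2x\,g_3g_4,
\]
so the four nested, pairwise disjoint circles are invariant. On the two concentric ones $\mathcal{X}=C\,(y,-x)$ with $C=10g_3g_4>0$. On $g_j=0$ one has $\mathcal{X}=-10F\,(y-\tfrac{1}{10},-x)$ with $F>0$.

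So all four circles are periodic orbits of a quartic field in your normal form. No count of intersection points or zeros of $C$ can produce a contradiction. They fail to be limit cycles only because $H=\bigl(\frac{x^2+y^2-1}{x^2+y^2-4}\bigr)^{7}\bigl(\frac{g_4}{g_3}\bigr)^{3}$ is a first integral. This is the missing ingredient, and it is exactly how the paper argues. It derives enough coefficient identities to show that either $f_3/f_4$ is a first integral or the four cofactors are linearly dependent. Darboux theory then yields a first integral, which is incompatible with limit cycles.

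\paragraph{What can be salvaged.}
Your non-collinear case is vacuous. Restrict the divisibility condition to the line through the origin and $(a,b)$. The four points of $F=0$ on that line are not on $g=0$, while the cofactor has degree at most $3$. So the cofactor vanishes on the line, forcing $a=0$: every extra centre lies on the $y$-axis (this is the paper's step $\alpha=0$).

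Now write $f_i=x^2+y^2-r_i^2$ and $b_jC=-\tilde fF+L_jg_j$ with $\deg L_j\le 2$. If two extra centres are distinct, comparing coefficients gives $L_3=L_4=\tilde f f_i$ for some $i$. Then $C$ vanishes identically on $f_i=0$, so in that subcase your periodicity idea does work. The coincident-centre subcase is precisely the example above, and your elimination step explicitly allows it. There you must use a first integral: the cofactors $2\tilde fxf_2$, $2\tilde fxf_1$, $2\tilde fxg_4$, $2\tilde fxg_3$ are $x$ times elements of the span of $x^2+y^2$, $y$, $1$, hence linearly dependent.

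\paragraph{Sharpness.}
Your construction here is only outlined; the paper simply cites \cite[Proposition 21]{llibre201616th}. If you build your own example, you must verify nonzero characteristic exponents explicitly. Note that if $C$ is even in $x$, the field is reversible with respect to the $y$-axis, and then the circles cannot be limit cycles.
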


\begin{proof}
Let $\mathcal{X}=(P, Q)$ be a planar quartic vector field having the circular limit cycles $f_1=0, \ldots, f_4=0$, where $f_1:=x^2+y^2-1$ and $f_2:=x^2+y^2-r_2^2$ with $r_2\neq 1$. By \Cref{inv-concentric-form}, we have
\begin{equation*}
    P = \alpha f_1f_2  + A y\quad \mbox{and}\quad Q= \beta f_1f_2  - A x.
\end{equation*}
for some $\alpha,\beta\in \RR$ and $A\in \RR[x,y]$ with $\deg A\leq 3$. If necessary, by a rotation, we may assume that $f_3:=(x-a)^2+y^2-r_3^2$. By \Cref{thm:hilbert-concentric}, $a\neq 0$. Thus, there exists $K_3\in \RR[x,y]$ such that
\begin{equation}\label{quartic-g3-inv}
    (x-a) (\alpha f_1f_2  + A y) + y( \beta f_1f_2  - A x ) = K_3f_3.
\end{equation}
On $y = 0$, \eqref{quartic-g3-inv} gives 
\begin{equation}\label{quartic-g3-eq2}
   \alpha (x-a) ( x^2 - 1) ( x^2 - r^2_2)=K_3(x,0)((x-a)^2-r_3^2).
\end{equation}
If $x\pm 1$ divides $((x-a)^2-r_3^2)$, then $f_1=0$ intersects $f_3=0$ at $(\mp 1,0)$. Furthermore, if $x\pm r_2$ divides $((x-a)^2-r_3^2)$, then $f_2=0$ intersects $f_3=0$ at $(\mp r_2,0)$. Therefore, $(x^2 - 1) ( x^2 - r^2_2)$ divides $K_3(x,0)$. Therefore, $K_3(x, 0) = 0$, since $\deg K_3\leq 3$. Thus, $ K_3 = \widehat{K_3}y$ for some $\widehat{K_3} \in \RR[x, y]$ with $\deg \widehat{K_3} \leq 2$. Consequently, \eqref{quartic-g3-eq2} gives $\alpha=0$. Hence, \eqref{quartic-g3-inv} simplifies to
\begin{equation*}
        (x-a)A + (\beta f_1f_2  - A x) = \widehat{K_3}f_3,
\end{equation*}
which leads to
$$A = \frac {1}{a}(\beta f_1f_2  - \widehat{K_3}f_3).$$
Hence, we have
\begin{equation}\label{new-form-quartic-two-concentric}
    P = \frac {1}{a} ( \beta f_1f_2 - \widehat{K_3}f_3) y\quad \mbox{and}\quad Q = -\frac{x-a}{a}\beta f_1f_2 + \frac {x}{a}\widehat{K_3}f_3,
\end{equation}
where $\beta \neq 0$, since $f_3$ is a limit cycle. 
Now, suppose that $f_4=(x-b)^2+(y-c)^2-r_4^2$. Then, there exists $K_4\in \RR[x,y]$  with $\deg K_4 \leq 3$ such that
\begin{equation*}
        \frac {(x-b)}{a} ( \beta f_1f_2  - \widehat{K_3}f_3)y -\frac{(x-a)(y-c)}{a}\beta f_1f_2+\frac {x(y-c)}{a}\widehat{K_3}f_3 = K_4f_4,
\end{equation*}
which simplifies to
\begin{equation}\label{quartic-g4-inv}
    \frac{\beta}{a}f_1f_2(cx-by+a(y-c))-\frac{1}{a}(cx-by)\widehat{K_3}f_3=K_4f_4.
\end{equation}

If $c \neq 0$, then substituting $x = \frac{by}{c}$ in \eqref{quartic-g4-inv} yields 
$$\frac{\beta}{a}((\frac{b^2}{c^2} + 1)y^2 - 1) ((\frac{b^2}{c^2} + 1)y^2 - r^2_2) a(y-c) =K_4(\frac{by}{c},y) ((\frac{b^2}{c^2}+1)(y-c)^2 - r^2_4).$$

If $(y+ \frac{c}{\sqrt{b^2+c^2}})$ or $(y- \frac{c}{\sqrt{b^2+c^2}})$ divides $((\frac{b^2}{c^2}+1)(y-c)^2 - r^2_4)$ then $f_1=0$ and $f_4=0$ intersect at either $(\frac{b}{\sqrt{b^2+c^2}},\frac{c}{\sqrt{b^2+c^2}})$ or $(-\frac{b}{\sqrt{b^2+c^2}},-\frac{c}{\sqrt{b^2+c^2}})$. Furthermore, if $(y+ \frac{r_2c}{\sqrt{b^2+c^2}})$ or $(y- \frac{r_2c}{\sqrt{b^2+c^2}})$ divides $((\frac{b^2}{c^2}+1)(y-c)^2 - r^2_4)$ then $f_2=0$ and $f_4=0$ intersect either at $(\frac{r_2b}{\sqrt{b^2+c^2}},\frac{r_2c}{\sqrt{b^2+c^2}})$ or $(-\frac{r_2b}{\sqrt{b^2+c^2}},-\frac{r_2c}{\sqrt{b^2+c^2}})$.
Therefore, $((\frac{b^2}{c^2} + 1)y^2 - 1) ((\frac{b^2}{c^2} + 1)y^2 - r^2_2)$ divides $K_4(\frac{by}{c},y)$. Thus,  $K_4(\frac{by}{c}, y)=0$, since $\deg K_4\leq 3$. Consequently \eqref{quartic-g4-inv} gives $\beta=0$. Hence, $x^2+y^2$ is a first integral of $\mathcal{X}$. This contradicts the existence of limit cycles $f_1=0,\ldots, f_4=0$. So, $c$ must be zero. Moreover, we assume that $\beta\neq 0$. By \Cref{thm:hilbert-concentric}, we also have $b\neq 0$. 
Therefore, \eqref{quartic-g4-inv} becomes 
\begin{equation}\label{quartic-c-0}
    \frac{\beta}{a} f_1f_2 y(a-b) + \frac{by}{a} \widehat K_3 f_3 = K_4 f_4.
\end{equation}
Observe that $y$ is a factor of $K_4$, say $K_4=y\widehat{K_4}$  with $\deg \widehat{K_4} \leq 2$. Thus, \eqref{quartic-c-0} can be rewritten as
\begin{equation}\label{rewritten}
    \frac{\beta (a-b)}{a} f_1f_2 + \frac{b}{a} \widehat K_3 f_3 = \widehat{K_4} f_4.
\end{equation}
Suppose that $2K_1$ and $2K_2$ are the cofactors of $f_1=0$ and $f_2=0$, respectively. Hence, $K_1=\beta y f_2$ and $K_2=\beta yf_1$. 
Note that $\deg(P)\leq 4$. Hence, \eqref{new-form-quartic-two-concentric} gives $$\beta (x^2+y^2)^2 = \widehat K_3^{(2)} (x^2+y^2) \quad \implies \widehat{K_3}^{(2)} = \beta (x^2+y^2).$$
Now, comparing the highest degree terms in \eqref{rewritten}, we get $$ \frac{\beta (a-b)}{a} (x^2+y^2)^2 + \frac{\beta b}{a}  (x^2+y^2)^2 = \widehat{K_4}^{(2)} (x^2+y^2).$$ So, $\widehat{K_4}^{(2)} = \beta(x^2+y^2)$ and hence $\widehat K_3^{(2)} = \widehat K_4^{(2)}$. Comparing the degree 3 terms in \eqref{rewritten}, we obtain $$\frac{b}{a} \widehat {K_3}^{(2)} (-2ax) + \frac{b}{a} \widehat K_3^{(1)} (x^2+y^2) =\widehat K_4^{(2)} (-2bx)+\widehat K_4^{(1)} (x^2+y^2).$$ This gives $b \widehat K_3^{(1)} = a\widehat K_4^{(1)}$. Also, comparing the degree one terms in \eqref{rewritten}, we obtain $$\frac{b}{a} \widehat K_3^{(1)} (a^2 - r^2_3) + \frac{b}{a} \widehat K_3^{(0)} (-2ax) =\widehat K_4^{(1)} (b^2-r^2_4) + \widehat K_4^{(0)} (-2bx).$$ Hence, 
\begin{equation}\label{k_3,1}
    \widehat K_3^{(1)} (a^2 - b^2-r_3^2 +r_4^2) = 2ax (\widehat K_3^{(0)}-\widehat K_4^{(0)}).
\end{equation}

\noindent\textbf{Case-1:} Suppose that $a^2 - b^2-r_3^2 +r_4^2=0$. Then, $\widehat K_3^{(0)}=\widehat K_4^{(0)}$. Comparing the degree 2 terms in \eqref{rewritten}, we get
\begin{dmath}\label{case-1}
    \frac{\beta(a-b)}{a}(x^2+y^2)(-r_2^2-1)+\frac{b}{a}\widehat{K_3}^{(2)}(a^2-r_3^2)+\frac{b}{a}\widehat{K_3}^{(1)}(-2ax)+\frac{b}{a}\widehat{K_3}^{(0)}(x^2+y^2)=\widehat{K_4}^{(2)}(b^2-r_4^2)+\widehat{K_4}^{(1)}(-2bx)+\widehat{K_4}^{(0)}(x^2+y^2).
\end{dmath}
Note that $\widehat{K_3}^{(2)}=\beta (x^2+y^2)$. Rewriting \eqref{case-1}, we have
$$\beta (x^2+y^2) (-\frac{a-b}{a}(r_2^2+1)+\frac{b}{a}(a^2-r_3^2)-(b^2-r_4^2)+\frac{1}{\beta}(\frac{b}{a}-1)K_3^{(0)}
)=2bx(\widehat{K_3}^{(1)}-\widehat{K_4}^{(1)}).$$
This implies that $x$ divides $x^2+y^2$, which is not possible. Hence, $\widehat{K_3}^{(1)}-\widehat{K_4}^{(1)}=0$. Therefore,  $\widehat{K_3}=\widehat{K_4}$ since $\widehat{K_3}^{(2)}=\widehat{K_4}^{(2)}$. Therefore, $f_3f_4^{-1}$ is a first integral of $\mathcal{X}$.

\noindent\textbf{Case-2:} Suppose that $a^2 - b^2-r_3^2 +r_4^2\neq 0$. Then, $\widehat K_3^{(1)}= \frac{2a}{a^2 - b^2-r_3^2 +r_4^2} (\widehat K_3^{(0)}-\widehat K_4^{(0)})x$. Next, we find if there exist $\lambda_1,\ldots,\lambda_4\in \RR$, not all zero such that $\sum_{i=1}^4 \lambda_iK_i=0$. If such $\lambda_i$'s exist then
\begin{equation*}
        \lambda_1 \beta f_2 +\lambda_2 \beta f_1 +\lambda_3 \widehat{K_3}+\lambda_4 \widehat{K_4}=0,
\end{equation*}
which is satisfied if and only if
\begin{equation}\label{existence-lambda_i}
    \beta (\sum_{i=1}^4 \lambda_i)(x^2+y^2) + (\lambda_3+ \frac{b}{a}\lambda_4 )\frac{2a(\widehat K_3^{(0)}-\widehat K_4^{(0)})}{a^2 - b^2-r_3^2 +r_4^2} x +(- \beta r_2^2\lambda_1- \beta \lambda_2 +  \widehat K_3^{(0)} \lambda_3 +  \widehat K_4^{(0)} \lambda_4) = 0. 
\end{equation}
We take $\lambda_3=-\frac{b}{a}$ and  $\lambda_4=1$. Consider the following system of equations in $\lambda_1$ and $\lambda_2$.
\begin{equation*}
    \begin{cases}
    \begin{split}
        &\lambda_1+\cdots+\lambda_4=0, \\
        - \beta r_2^2\lambda_1- \beta \lambda_2 +  \widehat K_3^{(0)} &\lambda_3 +  \widehat K_4^{(0)} \lambda_4=0 .
        \end{split}
    \end{cases}
\end{equation*}
This system simplifies to 
\begin{equation*}
    \begin{cases}
    \begin{split}
        \lambda_1+\lambda_2=\frac{b}{a}-1,\\
          \beta r_2^2\lambda_1+ \beta \lambda_2 =\widehat K_4^{(0)} -\frac{b}{a} \widehat K_3^{(0)}.
  \end{split}
    \end{cases}
\end{equation*}
Observe that $\begin{vmatrix}
    1&1\\
    \beta r_2^2 & \beta
\end{vmatrix}=\beta (1-r_2^2)\neq 0.$ Hence, the above system of linear equations in $\lambda_1, \lambda_2$ has a unique solution, say $\lambda_1=p, \lambda_2=q$. So, $pK_1+qK_2-\frac{b}{a}K_3+K_4=0$. By Darboux Integrability Theory, $f_1^p f_2^q f_3^{(-\frac{b}{a})} f_4$ is a Darboux first integral of $\mathcal{X}$. Thus, $f_1=0,\ldots,f_4=0$ cannot be limit cycles. This completes the proof. 

The second part follows from \cite[Proposition 21]{llibre201616th}.
\end{proof}

\begin{theorem}\label{thm:kolm-3}
    If $x^2+y^2-1=0$ is an invariant circle of a planar cubic Kolmogorov vector field, then the vector field has no circular limit cycle.
\end{theorem}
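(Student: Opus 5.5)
The plan is to combine the normal form of Proposition~\ref{cr:kolm3} with the fact that the coordinate axes are invariant for any Kolmogorov field. For $n=3$, Proposition~\ref{cr:kolm3} says $\xi,\zeta,C$ have degree less than $1$, so they are real constants. Writing $f=x^2+y^2-1$, the vector field is therefore
\begin{equation*}
P=x\,(\xi f+Cy^2),\qquad Q=y\,(\zeta f-Cx^2),\qquad \xi,\zeta,C\in\RR .
\end{equation*}
The strategy is to show that a circular limit cycle forces $\xi=\zeta=0$. The field then reduces to $(Cxy^2,-Cx^2y)$, which has the first integral $x^2+y^2$ and hence no limit cycles at all, a contradiction.

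First I will locate a putative circular limit cycle $g:=(x-a)^2+(y-b)^2-r^2=0$. Since $P=xP'$ and $Q=yQ'$, the lines $x=0$ and $y=0$ are invariant. A periodic orbit containing a point of an invariant line would lie entirely in that line, which is impossible for a circle. Hence the circle $g=0$ does not meet either axis. Equivalently, $g(x,0)=(x-a)^2+b^2-r^2$ has no real roots as a polynomial in $x$, and likewise $g(0,y)$ has no real roots in $y$. Each is a real quadratic that is irreducible over $\RR$.

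Next I will use invariance. There is a cofactor $K$ with $\deg K\le 2$ such that
\begin{equation*}
2(x-a)P+2(y-b)Q=Kg .
\end{equation*}
Restricting to $y=0$ gives
\begin{equation*}
2\xi\,x(x-a)(x^2-1)=K(x,0)\,g(x,0).
\end{equation*}
The left-hand side is $\xi$ times a product of real linear factors. So if $\xi\neq0$, the irreducible quadratic $g(x,0)$ could not divide it, and hence $\xi=0$. Restricting to $x=0$ in the same way gives
\begin{equation*}
2\zeta\,y(y-b)(y^2-1)=K(0,y)\,g(0,y),
\end{equation*}
which forces $\zeta=0$.

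Finally, with $\xi=\zeta=0$ the field is $(Cxy^2,-Cx^2y)=Cxy\,(y,-x)$. It satisfies $\mathcal{X}(x^2+y^2)=0$, so every orbit lies on a circle centered at the origin. If $C=0$ the field is identically zero. In either case no periodic orbit is isolated among periodic orbits, which contradicts the assumption that $g=0$ is a limit cycle. I do not expect a real obstacle here. The only point needing care is the justification that a periodic orbit cannot touch an invariant line, which is standard uniqueness of solutions. Notably, the argument never uses where the circle sits relative to $\SN^1$.
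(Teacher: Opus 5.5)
Your proposal is correct and follows the paper's proof: you use the same normal form from Proposition~\ref{cr:kolm3}, restrict the invariance identity to the two invariant axes to force $\xi=\zeta=0$ (the paper's $\alpha=\beta=0$), and conclude with the first integral $x^2+y^2$. The only difference is cosmetic. You use directly that $g(x,0)$ and $g(0,y)$ are irreducible real quadratics, which replaces the paper's factor-by-factor divisibility count with $\deg K_2\le 2$ and also absorbs its separate remark that $\SN^1$ itself is not a limit cycle.
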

\begin{proof}
Suppose that $\mathcal{X}=(P,Q)$ is a cubic Kolmogorov vector field having the circle  $x^2+y^2-1=0$ invariant. By \Cref{cr:kolm3},
    \begin{equation*}
P(x,y) =  x( \alpha(x^2+y^2-1) + \gamma y^2)~\mbox{and}~Q(x,y) = y(\beta (x^2+y^2-1) - \gamma x^2)
    \end{equation*}
where $\alpha, \beta, \gamma \in \RR$. Note that $x^2+y^2-1=0$ is not a limit cycle since $(1,0)$ is a singular point of $\mathcal{X}$. Suppose that $\mathcal{X}$ has a circular limit cycle $f_2=0$ where $f_2:=(x-a)^2+(y-b)^2-r^2$ with $a,b\neq 0$ since $f_2=0$ cannot intersect $x$ or $y$-axis.. Then, there exists $K_2\in \RR[x,y]$ such that 
\begin{equation}\label{kolmo-cubic}
    \begin{split}
      &(x-a)P+(y-b)Q=K_2f_2\\
    \implies&    (x-a)\alpha x (x^2+y^2-1) + (x-a)\gamma xy^2+ (y-b) \beta y (x^2+y^2-1) - (y-b) \gamma x^2y= K_2f_2\\
\implies& (\alpha x^2 - a\alpha x + \beta y^2 - b\beta y)(x^2+y^2-1) + \gamma xy (bx - ay)=K_2f_2
    \end{split}
\end{equation}
On $x=0$, the above equation gives
$$\beta y (y - b ) (y^2 -1) = K_2(0,y) ((y-b)^2 + a^2 - r^2).$$
If $y\pm 1$ divides $((y-b)^2 + a^2 - r^2)$, then $x^2+y^2-1=0$ and $f_2=0$ intersect at $(0,\mp 1)$. In this case, $f_2=0$ can not be a limit cycle. Hence, $y^2-1$ divides $K_2(0,y)$. If $y(y-b)$ divides $((y-b)^2 + a^2 - r^2)$ then $b=0$. So, $f_2=0$ will intersect $x$-axis. As $y=0$ is invariant for the vector $\mathcal{X}$ and $f_2=0$ is a limit cycle, this is not possible. So, at least one of $y$ and $(y-b)$ divides $K_2(0,y)$. Observe that $\deg K_2\leq 2$. Hence, $K_2(0,y)=0$ and consequently, $\beta=0$.

Also, on $y=0$, \eqref{kolmo-cubic} becomes
$$\alpha x(x-a)(x^2-1)=K_2(x,0)((x-a)^2+b^2-r^2).$$ 
Using similar arguments as in the above, we can prove that $\alpha=0$. Then, $x^2+y^2$ is a first integral of $\mathcal{X}$ since $\alpha,\beta=0$. This contradicts that $f_2=0$ is a limit cycle. Therefore, $\mathcal{X}$ cannot have any circular limit cycle.
\end{proof}

\begin{theorem}\label{thm-qt-kolm}
If $x^2+y^2-1=0$ is an invariant circle of a planar quartic Kolmogorov vector field, then the vector field has no circular limit cycle. 
\end{theorem}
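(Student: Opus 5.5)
We follow the strategy of Theorem \ref{thm:kolm-3}: first kill as many coefficients as possible by restricting the invariance relation to the coordinate axes, then show that what survives admits a first integral. By Proposition \ref{cr:kolm3} with $n=4$, we have
$$P=x\big((x^2+y^2-1)\xi+Cy^2\big),\qquad Q=y\big((x^2+y^2-1)\zeta-Cx^2\big)$$
with $\deg\xi,\deg\zeta,\deg C\le 1$. Since $(\pm1,0)$ and $(0,\pm1)$ are singular points, $\SN^1$ itself is not a limit cycle. Suppose $f_2=(x-a)^2+(y-b)^2-r^2=0$ is a circular limit cycle. Because the axes are invariant lines and $\SN^1$ is invariant, the periodic orbit $f_2=0$ cannot meet the axes or $\SN^1$. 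Hence it lies in an open quadrant, $a,b\ne0$, and $r<\min(|a|,|b|)$.

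\textbf{Step 1: restrict to the axes.} Write $(x-a)P+(y-b)Q=K_2f_2$ with $\deg K_2\le 3$. On $x=0$ this becomes
$$y(y-b)(y^2-1)\,\zeta(0,y)=K_2(0,y)\big((y-b)^2+a^2-r^2\big).$$
The quadratic on the right has no real roots, since $r<|a|$. It is therefore coprime to the real linear factors on the left, so it must divide $\zeta(0,y)$. As $\deg\zeta\le1$, this forces $\zeta(0,y)\equiv0$, i.e. $\zeta=\beta x$. The same argument on $y=0$ gives $\xi=\alpha y$. Consequently
$$\mathcal{X}=xy\,\mathcal{R},\qquad \mathcal{R}=\big(\alpha f_1+Cy,\ \beta f_1-Cx\big),\quad f_1=x^2+y^2-1 .$$

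\textbf{Step 2: reduce to a quadratic field.} Inside the open quadrant containing $f_2=0$, the factor $xy$ has constant sign. So $\mathcal{X}$ and $\mathcal{R}$ have the same orbits there, up to time reparametrization and possibly reversal. Thus $f_2=0$ is a limit cycle of the quadratic field $\mathcal{R}$, and $\SN^1$ is also invariant for $\mathcal{R}$. Applying Proposition \ref{prop:s1-char} after translating to the center $(a,b)$, we also get $\mathcal{R}=\big(\alpha' f_2+A'(y-b),\ \beta' f_2-A'(x-a)\big)$ with $\alpha',\beta'$ constants and $\deg A'\le1$. We then compare the two representations coefficient by coefficient, as in the proof of Theorem \ref{thm:two-con-quartic}. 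The quadratic parts give $\alpha'=\alpha$, $\beta'=\beta$, and the remaining equations relate $C$ to $A'$.

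\textbf{Step 3: produce a first integral (main obstacle).} This is the hardest step: two invariant circles of a quadratic field are not, by degree count alone, enough to force a Darboux first integral. The cofactors are
$$K_1=2(\alpha x+\beta y),\qquad K_2=2\alpha(x-a)+2\beta(y-b).$$
Both are linear, with the same linear part. Hence $K_1-K_2=2(\alpha a+\beta b)$ is constant. We would first try to use the coefficient relations from Step 2, together with the fact that $f_1=0$ and $f_2=0$ are disjoint, to show that either $\alpha a+\beta b=0$ or $\alpha=\beta=0$.
\begin{itemize}
\item If $\alpha a+\beta b=0$, then $K_1=K_2$, so $f_1/f_2$ is a first integral of $\mathcal{R}$.
\item If $\alpha=\beta=0$, then $x^2+y^2$ is a first integral.
\end{itemize}
Either way $f_2=0$ lies in a continuum of periodic orbits and is not a limit cycle, a contradiction.

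As a fallback, we would compute the divergence of $\mathcal{R}$ along $f_2=0$. The Dolov-type criterion used in the proof of Proposition \ref{prop:s1-limit-cycle}, translated to center $(a,b)$, says $f_2=0$ is a limit cycle if and only if $\alpha'(-A'_x)+\beta'(-A'_y)\ne0$. We would then show that the relations forced by the invariance of $\SN^1$ make this quantity vanish.
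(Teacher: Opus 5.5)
Your Steps 1 and 2 are correct. Step 1 is in fact tidier than the paper's argument: since $r<|a|$, the quadratic $(y-b)^2+a^2-r^2$ is irreducible over $\RR$, so it must divide $\zeta(0,y)$, which forces $\zeta=\beta x$. Likewise $\xi=\alpha y$, and dividing out $xy$ legitimately reduces the problem to the quadratic field $\mathcal{R}$.

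The gap is Step 3, which you flag as the main obstacle but never carry out. You only say you ``would try'' to prove $\alpha a+\beta b=0$ or $\alpha=\beta=0$, and the Dolov-type fallback is asserted without computation. Since $K_1-K_2=2(\alpha a+\beta b)$ is automatically constant, this single relation is the whole remaining content of the theorem. As written, the proposal is therefore not a proof.

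The missing step is a short elimination. Subtracting your two representations of $\mathcal{R}$ gives $\alpha(f_1-f_2)=A'(y-b)-Cy$ and $\beta(f_1-f_2)=Cx-A'(x-a)$. Multiplying these by $x$ and $y$ respectively and adding kills $C$:
\[
(\alpha x+\beta y)(f_1-f_2)=A'\,(ay-bx),\qquad f_1-f_2=2ax+2by+r^2-1-a^2-b^2 .
\]
As $(a,b)\neq(0,0)$, $f_1-f_2$ is a non-constant linear polynomial that is not a multiple of $ay-bx$. So the irreducible factor $ay-bx$ divides $\alpha x+\beta y$, giving $(\alpha,\beta)=\mu(-b,a)$. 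Hence $\alpha a+\beta b=0$ and $K_1=K_2$. The case $\alpha=\beta=0$ is just $\mu=0$, and the disjointness of the circles is not needed.

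This is exactly the relation $\alpha=\delta b$, $\beta=-\delta a$ that the paper reaches by a different route. The paper restricts the invariance identity to the line $bx=ay$ to force $K_2=\gamma xy(bx-ay)$, then divides through. Both routes end with $f_1$ and $f_2$ sharing a cofactor, so $f_2/f_1$ is a first integral. With this step inserted, your reduction to $\mathcal{R}$ gives a valid alternative that avoids the paper's analysis of $K_2$. It also shows more generally that any quadratic field with two non-concentric invariant circles has them sharing a cofactor.
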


\begin{proof}
Suppose that $\mathcal{X}=(P,Q)$ is a quartic Kolmogorov vector field having the circle $f_1:=x^2+y^2-1=0$ invariant. By \Cref{cr:kolm3},
\begin{equation*}
P = x((x^2+y^2-1)f + Ay^2)~\mbox{and}~Q= y((x^2+y^2-1)g - Ax^2)
\end{equation*}
where $f, g, A$ are polynomials of degree at most one. Suppose that $\mathcal{X}$ has a circular limit cycle $f_2=0$ where $f_2:=(x-a)^2+(y-b)^2-r^2$. Note that $a,b\neq 0$ since $f_2=0$ cannot intersect $x$ or $y$-axis. Then there exists $K_2\in \RR[x,y]$ with $\deg K_2 \leq 3$ such that
$$(x-a) P+ (y-b)Q = K_2((x-a)^2 + (y-b)^2 - r^2).$$ Hence, 
\begin{equation}\label{kolm-deg-4}
    (x^2+y^2-1)(x(x-a)f + y(y-b)g) + Axy(bx-ay)=K_2((x-a)^2 + (y-b)^2 - r^2)
\end{equation}
On $x= 0$, \eqref{kolm-deg-4} gives $$(y^2-1)y (y-b) g (0,y) = K_2(0,y)(((y-b)^2 +a^2- r^2).$$ If $(y\pm 1)$ divides $((a^2 + (y-b)^2 - r^2)$, then $f_1=0$ and $f_2=0$ intersect at $(0,\mp 1)$. So, $y^2 - 1$ divides $K_2(0,y)$. If $y-b$ divides $((a^2 + (y-b)^2 - r^2)$, then at $(0,b)$, the x-axis and $f_2=0$ intersect. Also, if $y$ divides $((a^2 + (y-b)^2 - r^2)$, then at $(0,0)$, the $x$-axis and $f_2=0$ intersect. Note that $y=0$ is invariant for the vector field $\mathcal{X}$. Hence, both cases contradict the fact that $f_2=0$ is a limit cycle. 
So, $ y(y-b) $ also divides $ K_2(0,y)$. Thus, $ K_2(0,y) = 0,$ since $\deg K_2 \le 3 $. So, $x$ divides $K_2$. 

On $y= 0$, \eqref{kolm-deg-4} gives $(x^2-1)x (x-a) f (x,0) = K_2(x,0)((x-a)^2 +b^2- r^2)$. By similar arguments as above, $ y$ divides $K_2$. Therefore, $xy$ divides  $K_2$. Hence, from \eqref{kolm-deg-4}, $xy $ divides  $ x(x-a) f + y(y-b)g $. This gives $f=\alpha y$ and $g=\beta x$ for some $\alpha,\beta \in \RR$. On $x=\frac{ay}{b}$, \eqref{kolm-deg-4} becomes
$$((\frac{a^2}{b^2}+1)y^2-1)y(y-b)(\frac{a^2}{b^2}f(\frac{ay}{b},y)+g(\frac{ay}{b},y))=K_2(\frac{ay}{b},y)((\frac{a^2}{b^2}+1)(y-b)^2 - r^2).$$ Observe that $y-b$ divides $K_2(\frac{ay}{b},y)$. If $(\sqrt{\frac{a^2}{b^2}+1})y-1$ or $(\sqrt{\frac{a^2}{b^2}+1})y+1$ divides ${((\frac{a^2}{b^2}+1)(y-b)^2 - r^2)}$, then $f_1=0$ intersects $f_2=0$ at $(\frac{a}{\sqrt{a^2+b^2}},\frac{b}{\sqrt{a^2+b^2}})$ or $(-\frac{a}{\sqrt{a^2+b^2}},-\frac{b}{\sqrt{a^2+b^2}})$. So, $(\frac{a^2}{b^2}+1)y^2-1)$ divides $K_2(\frac{ay}{b},y)$. Also, $y$ divides $K(\frac{ay}{b},y)$, otherwise $f_2=0$ intersects $x$-axis at $(0,0)$. Hence, $K_2(\frac{ay}{b},y)=0$ since $\deg K_2\leq 3$. Consequently, $bx-ay$ divides $K_2$. So, $K_2=\gamma xy(bx-ay)$ for some $\gamma\in \RR$. We rewrite \eqref{kolm-deg-4} and get the following. 
\begin{equation}\label{g2-inv-kolm-new}
    (x^2+y^2-1)(\alpha(x-a) + \beta(y-b)) + A(bx-ay)=\gamma(bx-ay)((x-a)^2 + (y-b)^2 - r^2).
\end{equation}
Hence, $bx-ay$ must divide $\alpha(x-a) + \beta(y-b)$. Then, $\alpha=\delta b$ and $\beta=-\delta a$ for some $\delta\in \RR$. So, rewriting \eqref{g2-inv-kolm-new}, we get
$$(x^2+y^2-1)\delta + A=\gamma((x-a)^2 + (y-b)^2 - r^2).$$
Since $\deg A\leq 1$, comparing the highest degree terms in the above equation, we obtain $\gamma=\delta$. So,
\begin{equation*}
P = xy(\gamma b(x^2+y^2-1) + Ay)~\mbox{and}~Q= xy(-\gamma a(x^2+y^2-1) - Ax).
\end{equation*}
Observe that the cofactor of $f_1=0$ is equal to $2K_2$. By Darboux Integrability Theory, $f_2 f_1^{-1}$ is a first integral of $\mathcal{X}$, which restricts $f_2=0$ to be a limit cycle. Hence, we get a contradiction. Thus, the proof is complete.
\end{proof} 

\noindent {\bf Acknowledgments.}
The authors thank J. Llibre for helpful discussions. The first author is supported by the Postdoctoral Fellowhip of IIT Bombay, India. The second author thanks `ANRF-India' for a MATRICS Research Grant ( ANRF/ARGM/2025/002640/MTR).


\end{document}